\newtheorem{Theorem}{Theorem}[section]
\newtheorem{Lemma}[Theorem]{Lemma}
\newtheorem{Corollary}[Theorem]{Corollary}
\newtheorem{Proposition}[Theorem]{Proposition}
\newtheorem{Definition}[Theorem]{Definition}
\theoremstyle{definition}
\newtheorem{Remark}[Theorem]{Remark}
\DeclareMathOperator{\supp}{supp}
\begin{document}

\title{Diameter two properties  and the Radon-Nikod\'ym property in Orlicz spaces}
\keywords{Banach function space, Orlicz space, Daugavet property, (local, strong) diameter two property, Radon-Nikod\'ym property, octahedral norm, uniformly non-$\ell_1^2$ points}
\subjclass[2010]{46B20, 46E30, 47B38}

\author{Anna Kami\'{n}ska}
\address{Department of Mathematical Sciences,
The University of Memphis, TN 38152-3240}
\email{kaminska@memphis.edu}

\author{Han Ju Lee}
\address{Department of Mathematics Education, Dongguk University - Seoul, 04620 (Seoul), Republic of Korea}
\email{hanjulee@dgu.ac.kr}

\author{Hyung Joon Tag}
\address{Department of Mathematical Sciences,
The University of Memphis, TN 38152-3240}
\email{hjtag4@gmail.com}

\date{\today}

\thanks{
The second author was supported by Basic Science Research Program through the National Research Foundation of Korea(NRF) funded by the Ministry of Education, Science and Technology [NRF-2020R1A2C1A01010377].}

\begin{abstract}
Some necessary and sufficient conditions are found for Banach function lattices to have the Radon-Nikod\'ym property. Consequently it is shown that an Orlicz function space $L_\varphi$  over a non-atomic $\sigma$-finite measure space $(\Omega, \Sigma,\mu)$, not necessarily separable, has the Radon-Nikod\'ym property if and only if $\varphi$ is an $N$-function at infinity and satisfies the appropriate $\Delta_2$ condition. For an Orlicz sequence space $\ell_\varphi$, it has the Radon-Nikod\'ym property if and only if $\varphi$ satisfies  the  $\Delta_2^0$ condition. In the second part  a relationship between uniformly $\ell_1^2$ points of the unit sphere of a Banach space and the diameter of the slices are studied. Using these results, a quick proof is given that an Orlicz space $L_\varphi$ has the Daugavet property only if $\varphi$ is linear, so when $L_\varphi$ is isometric to $L_1$. Another consequence is that  Orlicz spaces equipped with the Orlicz norm generated by $N$-functions never have  the local diameter two property, while it is well-known that when equipped with the Luxemburg norm, it may have that property.  Finally, it is shown that the local diameter two property, the diameter two property, and the strong diameter two property are equivalent in Orlicz function and sequence spaces with the Luxemburg norm under appropriate   conditions on $\varphi$.  	
\end{abstract}

\maketitle

\begin{center}{Dedicated to the memory of Professor W.A.J. Luxemburg}\end{center}

\section{Introduction}

The objective of this paper is to study geometrical properties in real Banach spaces, in particular in Banach function spaces and  Orlicz spaces.
A Banach space $(X, \|\cdot\|)$ is said to have the {\it Daugavet property} if every rank one operator $T: X\to X$ satisfies the equation
\[
\|I + T\| = 1 + \|T\|.
\]

It is well-known that $C[0,1]$ has the Daugavet property. Also, a rearrangement invariant space $X$ over a finite non-atomic measure space with the Fatou property satisfies the Daugavet property if the space is isometrically isomorphic to either $L_1$ or $L_{\infty}$ \cite{AKM,  AKM2}. If a rearrangement invariant space $X$ over an infinite non-atomic measure space is uniformly monotone, then  it is isometrically isomorphic to $L_1$ \cite{AKM2}. Furthermore, the only separable rearrangement invariant space over $[0,1]$ with the Daugavet property is $L_1[0,1]$ with the standard $L_1$-norm \cite{KMMW}. In \cite{KK}, a characterization of Musielak-Orlicz spaces with the Daugavet property has been provided.  We refer to \cite{KSSW, W} for further information on the Daugavet property.

Let $S_X$ and $B_X$ be the unit sphere and the unit ball of a Banach space $X$ and let $X^*$  be the dual space of $X$. A slice of $B_X$ determined by $x^*\in S_{X^*}$ and $\epsilon>0$ is defined by the set
\[
S(x^*; \epsilon) = \{x\in  B_X : x^*(x) > 1 - \epsilon\}.
\]
Analogously, for $x\in S_X$ and $\epsilon >0$, a weak$^*$-slice $S(x, \epsilon)$ of $B_{X^*}$ is defined by the set
\[
S(x; \epsilon) = \{x^*\in B_{X^*}: x^*(x) > 1 - \epsilon\}.
\]
There are several geometrical properties related to slices and weak$^*$-slices. We say that $X$ has

\begin{enumerate}[{\rm(i)}]
\item the {\it local diameter two property} (LD2P) if every slice of $B_X$ has the diameter two.
\item the {\it diameter two property} (D2P) if every non-empty relatively weakly open subset of $B_X$ has the diameter two.
\item the {\it strong diameter two property} (SD2P) if every finite convex combination of slices of $B_X$ has the diameter two.
\item the {\it weak$^*$-local diameter two property} (weak$^{*}$-LD2P) if every weak$^*$-slice of  $B_{X^*}$ has the diameter two.
\item the {\it Radon-Nikod\'ym property} (RNP) if there exist slices of $B_X$ with arbitrarily small diameter.
\end{enumerate}

A few remarks are in order now. Condition $\rm(v)$ is a geometrical interpretation of the classical Radon-Nikod\'ym  property \cite[Theorem 3, p. 202]{DU}. By the  definitions, we  see that properties (i), (ii) and (iii) are on the opposite spectrum of $\rm(v)$. It is clear that $\rm(ii) \implies \rm(i)$, and the implication  $\rm(iii) \implies \rm(ii)$ results from
 \cite[Lemma II.1 p. 26]{GGMS}.
 It is also well-known that  these three properties are not equivalent in general {\cite{ALN, BLR}, see also introductory remarks in \cite{HLP}. A Banach space $X$ with the Daugavet property satisfies the SD2P \cite[Theorem 4.4]{ALN}.

After Preliminaries, in Section 3, we show first that if a Banach function space $X$ over a $\sigma$-finite measure space has the RNP then it must be order continuous. The opposite implication is not true in general. However we prove it  under additional assumptions when  $X$ satisfies the Fatou property, and when  the subspaces of order continuous elements and the closure of simple functions coincide in  its  K\"othe dual space $X'$. We will provide some examples to see that this assumption is necessary in order to show the converse.  Applying the obtained results further, we conclude the section with the necessary and sufficient condition for the RNP in Orlicz spaces. There is a well-known criterion for the RNP in Orlicz spaces $L_\varphi$ over a separable complete non-atomic measure space $(\Omega, \Sigma, \mu)$, generated by an $N$-function $\varphi$. Here we drop the assumption of separability of a measure space and show that necessary and sufficient  conditions for the RNP is that $\varphi$  satisfies appropriate $\Delta_2$ condition and that $\varphi$ is an $N$-function at infinity. In sequence spaces $\ell_\varphi$ we drop the assumption that $\varphi$ is an $N$-function.

In section 4 the Daugavet and various diameter two properties are studied.
In the first main theorem we give a local characterization of uniformly $\ell_1^2$ points $x\in S_X$, where $X$ is a Banach space, and the diameter of the weak$^*$-slice $S(x;\epsilon)$ generated by $x$. Analogously we describe a relationship  between $x^*\in S_{X^*}$ and the diameter of the slice $S(x^*,\epsilon)$ given by $x^*$. Consequently, we obtain a description of global properties of $X$ or $X^*$ being locally octahedral and $X^*$ or $X$ respectively, having (weak$^*$) local diameter two property. We also obtain relationships among the Daugavet property of $X$ and the D2P of $X$ and weak$^*$-LD2P of $X^*$. In Theorem \ref{th:KamKub} we provide sufficient conditions for the existence of uniformly non-$\ell_1^2$ points in $L_\varphi$ and $\ell_\varphi$ both equipped with the Luxemburg norms.  Combining this with the previous general facts we recover instantly that the only Orlicz space $L_\varphi$ generated by a finite function $\varphi$ with the Daugavet property must coincide with $L_1$ as sets with  equivalent norm. The other consequences are that   large class of the Orlicz spaces $L_\varphi^0$ and $\ell_\varphi^0$ equipped with the Orlicz norm, does not have the LD2P, in the striking opposition  to the same Orlicz spaces equipped with the Luxemburg norm.
In the final result we show that the LD2P, D2P, SD2P and the appropriate condition $\Delta_2$ are equivalent in $L_\varphi$ and $\ell_\varphi$.

\section{Preliminaries}
Let $(\Omega, \Sigma, \mu)$ be a measure space with a $\sigma$-finite complete measure $\mu$ and let $L^0(\Omega)$ be the set of all equivalence classes of $\mu$-measurable functions $f:\Omega\to \mathbb{R}$ modulo a.e. equivalence. We denote by $L^0=L^0(\Omega)$ if $\Omega$ is a non-atomic measure space and  by $ \ell^0=L^0(\mathbb{N})$ if $\Omega = \mathbb{N}$ with the counting measure $\mu$. That is, $\ell^0$ consists of all real-valued sequences $x = \{x(n)\}$. A Banach space $(X, \|\cdot\|) \subset L^0(\Omega)$ is called a {\it Banach function lattice} if for $f\in L^0(\Omega)$ and $g \in X$, $0 \leq f \leq g$ implies $f \in X$ and $\|f\| \leq \|g\|$. We call $X$ a {\it Banach function space} if $\Omega$ is a non-atomic measure space and a {\it Banach sequence space} if $\Omega = \mathbb{N}$ with the counting measure $\mu$. A Banach function lattice $(X, \| \cdot \|)$ is said to have the {\it Fatou property} if whenever a sequence $(f_n) \subset X$ satisfies $\sup_n \|f_n\| < \infty$ and $f_n \uparrow f\in L^0(\Omega)$ a.e., we have $f \in X$ and $\|f_n\| \uparrow \|f\|$. An element $f\in X$ is said to be {\it order continuous} if for every $(f_n) \subset L^0(\Omega)$ such that $0\le f_n \le f$, $f_n \downarrow 0$ a.e. implies $\|f_n\| \downarrow 0$. The set of all order continuous elements in $X$ is denoted by $X_a$, and the closure in $X$ of all simple functions belonging to $X$ is denoted by $X_b$. If $X=X_a$ then we say that $X$ is order continuous. In this paper, a simple function is a finitely many valued function whose support is of finite measure. It is well-known that $X_a \subset X_b$ \cite[Theorem 3.11, p. 18]{BS}\cite{ Lux}.

The {\it K\"{o}the dual space}, denoted by $X^{\prime}$, of a Banach function lattice $X$ is a set of $x \in L^0(\Omega)$, such that
\[
\|x\|_{X^{\prime}}= \sup\left\{\int_\Omega xy : \|y\| \leq 1\right\} < \infty.
\]
The space  $X^{\prime}$, equipped with the norm $\| \cdot \|_{X^{\prime}}$, is a Banach function lattice satisfying the Fatou property. It is well known that $X = X''$ if and only if $X$ satisfies the Fatou property \cite[Theorem 1, Ch. 15, p. 71]{Z}.

We say $f,g \in X$ are {\it equimeasurable}, denoted by $f \sim g$, if $\mu\{t: |f(t)| > \lambda\} = \mu\{t: |g(t)| > \lambda\}$ for every $\lambda > 0$. A Banach function lattice $(X, \|\cdot\|)$ is said to be {\it rearrangement invariant} (r.i.) if $f \sim g$ ($f,g \in X$) implies $\|f\| = \|g\|$. The Lebesgue, Orlicz and Lorentz spaces are  classical examples of r.i. spaces. The fundamental function of a r.i. space $X$ over a non-atomic measure space is defined by $\phi_X(t) = \|\chi_{E}\|_X$, $t\ge 0$, where $E\in \Sigma$ is such that $\mu(E) = t$. It is known that $\lim_{t \rightarrow 0^+} \phi_X(t) = 0$ if and only if $X_a = X_b$ \cite[Theorem 5.5, p. 67]{BS}. For the purely atomic case where each atom has the same measure,}$X_a = X_b$ is always true \cite[Theorem 5.4, p. 67]{BS}.

Recall that a measure space $(\Omega, \Sigma, \mu)$ is said to be {\it separable} if there is a countable family $\mathcal{T}$ of measurable subsets such that for given $\epsilon>0$ and for each $E\in \Sigma$ of finite measure there is $A\in \mathcal{T}$ such that $\mu(A\Delta E)<\epsilon$, where $A\Delta E$ is the symmetric difference of $A$ and $E$. It is easy to check that if $\Sigma$ is a $\sigma$-algebra generated by countable subsets then $(\Omega, \Sigma, \mu)$ is separable \cite{Hal}.

A function $\varphi:\mathbb{R}_+\to [0,\infty]$ is called an {\it Orlicz function} if $\varphi$ is convex,  $\varphi(0)=0$, and $\varphi$ is left-continuous, not identically zero nor infinite on $(0,\infty)$.  The complementary function $\varphi_{\ast}$ to $\varphi$ is defined by
\[
\varphi_{\ast}(u) = \sup_{v \geq 0}\{uv- \varphi(v) \}, \ \ \ u\ge 0.
\]
The complementary function $\varphi_*$ is also an Orlicz function and $\varphi_{**} = \varphi$.

An Orlicz function $\varphi$ is an {\it $N$-function at zero} if $\lim_{u \rightarrow 0^+} \frac{\varphi(u)}{u} = 0$ and {\it at infinity} if $\lim_{u \rightarrow \infty} \frac{\varphi(u)}{u} = \infty$. If $\varphi$ is an $N$-function at both zero and infinity  then we say that $\varphi$ is  an {\it $N$-function}. A function $\varphi$ is an $N$-function if and only if $\varphi_*$ is an $N$-function.

An Orlicz function $\varphi$ satisfies the {\it $\Delta_2$ condition} if there exists $K>2$ such that $\varphi(2u) \leq K \varphi(u)$ for all $u\geq 0$,  the $\Delta_2^\infty$ condition if there exist $K>2$ and $u_0\ge 0$  such that $\varphi(u_0) < \infty$ and for all $u\geq u_0$, $\varphi(2u) \leq K \varphi(u)$, and  the $\Delta_2^0$ condition if there exist $K>2$ and $u_0$ such that $\infty > \varphi(u_0) > 0$  and for all $0\le u \leq u_0$, $\varphi(2u) \leq K \varphi(u)$. When we use the term {\it the appropriate $\Delta_2$ condition}, it means $\Delta_2$ in the case of a non-atomic measure $\mu$ with $\mu(\Omega) = \infty$,  $\Delta_2^\infty$ for  a non-atomic measure $\mu$ with $\mu(\Omega) < \infty$, and $\Delta_2^0$ for $\Omega = \mathbb{N}$ with the counting measure i.e. $\mu\{n\} =1$ for every $n\in \mathbb{N}$.

The Orlicz space $L_\varphi(\Omega)$  is a collection of all $f\in L^0(\Omega)$ such that for some $\lambda > 0$,
\[
I_\varphi(\lambda f):= \int_\Omega\varphi(\lambda |f(t)|)\,d\mu(t) = \int_\Omega\varphi(\lambda |f|)\,d\mu  < \infty.
\]
The Orlicz spaces are equipped with  either the Luxemburg norm
\[
\|f\|_\varphi= \inf\left\{\epsilon > 0: I_\varphi\left(\frac{f}{\epsilon}\right) \le 1\right\},
\]
or the Orlicz (or Amemiya) norm
\[
\|f\|_\varphi^0= \sup\left\{\int_\Omega fg : I_{\varphi_*} (g)\le 1\right\} = \inf_{k>0} \frac{1}{k}(1 + I_\varphi(kf)).
\]
It is well-known that $\|f\|_\varphi \le \|f\|_\varphi^0 \le 2\|f\|_\varphi$ for $f\in L_\varphi(\Omega)$.
By $L_\varphi(\Omega)$ we denote an Orlicz space equipped with the Luxemburg norm and by $L_\varphi^0(\Omega)$ with the Orlicz norm.
The  Orlicz spaces with either norms are rearrangement invariant spaces and have the Fatou property.

If $\varphi$ is finite, i.e. $\varphi(u)< \infty$ for all $u>0$, then $(L_\varphi(\Omega))_a \ne \{0\}$ and it contains all simple functions. Therefore
\[
(L_\varphi(\Omega))_a = (L_\varphi(\Omega))_b = \{x\in L^0: I_\varphi(\lambda x) < \infty \ \ \text{for all} \ \ \lambda > 0\}.
\]
It is also well-known that $L_\varphi(\Omega) = (L_\varphi(\Omega))_a$ if and only if  $\varphi$ satisfies the appropriate $\Delta_2$ condition. The K\"othe duals of $L_\varphi(\Omega)$ and $L_\varphi^0(\Omega)$ are described  by Orlicz spaces induced by $\varphi_*$ \cite{Chen, BS}. In fact,
\[
 (L_\varphi(\Omega))' = L_{\varphi_*}^0(\Omega)\ \ \text{ and} \ \ \
(L_\varphi^0(\Omega))' = L_{\varphi_*}(\Omega).
\]

In the case of non-atomic measure  (resp., counting measure), we use the symbols $L_\varphi$ and $L_\varphi^0$, (resp., $\ell_\varphi$ and $\ell_{\varphi}^0$)  for Orlicz  spaces equipped with the Luxemburg and the Orlicz norm, respectively. For complete information on Orlicz spaces we refer the reader to the monographs \cite{BS, Chen, KR, LT1, LT2, Lux}.

\section{The Radon-Nikod\'ym property}

We start with a general result on the Radon-Nikod\'ym property in  Banach function spaces.
\begin{Theorem}\label{th:RNKothe}
Let $X$ be a Banach function space  over a complete $\sigma$-finite measure space $(\Omega, \Sigma, \mu)$.
\begin{itemize}
\item[(i)]
If $X$ has the RNP then $X$ is order continuous.
\item[(ii)]
Assume that $X$ has the Fatou property and $(X')_a = (X')_b$.  Then if $X$ is order continuous then $X$ has the RNP.
\end{itemize}
\end{Theorem}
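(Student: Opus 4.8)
\emph{Part (i).} I would prove the contrapositive and locate a forbidden subspace. Assume $X$ is not order continuous. By the classical structure theory of Banach lattices, a Banach lattice fails to be order continuous exactly when it contains a closed sublattice lattice-isomorphic to $c_0$; hence $X$ contains an isomorphic copy of $c_0$. Now $c_0$ fails the RNP (indeed every slice of $B_{c_0}$ has diameter two, so no slice is small), and the RNP is inherited by closed linear subspaces. Were $X$ to have the RNP, this copy of $c_0$ would too, a contradiction; this proves (i). The only ingredients are the lattice characterization ``order continuous $\iff$ no copy of $c_0$'' and the hereditary behavior of the RNP.

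\emph{Part (ii).} The strategy is to exhibit $X$ as a dual space and then read off the RNP from the order continuity of $X$; notably this route assumes no separability of $(\Omega,\Sigma,\mu)$. Put $Z := (X')_a$, the order-continuous part of the K\"othe dual. Being order continuous, $Z$ has the property that its Banach dual coincides with its K\"othe dual, $Z^* = Z'$. The hypothesis $(X')_a = (X')_b$ means every simple function lying in $X'$ has absolutely continuous norm, so $Z$ contains all such simple functions; truncating a general $f \in X'$ by $f\chi_{B_k}\chi_{\{|f| \le k\}}$ with $B_k \uparrow \Omega$ of finite measure (these truncations lie in $Z$ by the ideal property) and passing to the limit gives $Z' = (X')'$. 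Combining this with the Fatou property, which yields $X'' = X$, produces the isometric identification
\[
Z^* = Z' = (X')' = X'' = X .
\]
Thus $X$ is the dual of the order-continuous Banach function space $Z$.

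With $X$ realized as a dual Banach lattice, I would finish by invoking the duality between order continuity and the RNP: a dual Banach lattice has the RNP precisely when its norm is order continuous (equivalently, its predual $Z$ is an Asplund space, and the dual of an Asplund space has the RNP). Since $X = Z^*$ and $X$ is order continuous by hypothesis, this delivers the RNP of $X$. The main obstacle -- and the point at which the two standing hypotheses are genuinely consumed -- is the duality step $X = ((X')_a)^*$: the equality $(X')_a = (X')_b$ is exactly what forces the simple functions into the predual (this is what fails for $X = L_1$, where $(X')_a = \{0\}$ while $(X')_b = X'$), and the Fatou property is what closes the identification through $X'' = X$. Once $X$ appears as an order-continuous dual lattice, the Asplund/RNP machinery is a black box; verifying or correctly citing the lattice characterization of Asplundness is the only deep external input.
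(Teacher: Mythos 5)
Your part (i) and the first half of part (ii) track the paper closely and are essentially fine. For (i), the paper argues via $\ell_\infty$ (for $\sigma$-Dedekind complete lattices, such as Banach function lattices, failure of order continuity yields a lattice copy of $\ell_\infty$); your stated equivalence ``order continuous $\iff$ no lattice copy of $c_0$'' is false as an equivalence, since $c_0$ is itself order continuous and contains $c_0$ --- the correct $c_0$-statement characterizes KB-spaces, not order continuity --- but the one direction you actually use (not order continuous $\implies$ contains a copy of $c_0$) is true, e.g.\ because a seminormalized order bounded disjoint sequence spans a lattice copy of $c_0$, so the argument survives. Likewise, your identification $X=((X')_a)^*$ via $Z^*=Z'=(X')'=X''=X$, with $(X')_a=(X')_b$ forcing the simple functions into $Z$ and the Fatou property closing the loop, is exactly the paper's first step in (ii).

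The genuine gap is the final ``black box'' of part (ii): the theorem you invoke, that a dual Banach lattice has the RNP precisely when its norm is order continuous (equivalently, that order continuity of $Z^*$ makes $Z$ Asplund), is false. A counterexample is $M[0,1]=C[0,1]^*$: it is an AL-space, hence has order continuous norm, and it is a dual Banach lattice, yet it contains the band of measures absolutely continuous with respect to Lebesgue measure, which is lattice-isometric to $L_1[0,1]$; since $L_1[0,1]$ fails the RNP and the RNP passes to closed subspaces, $M[0,1]$ fails the RNP. Equivalently, $C[0,1]$ is not Asplund even though its dual is order continuous. (The same phenomenon occurs for $(L_\infty[0,1])^*=L_1[0,1]^{**}$.) So realizing $X$ as an order-continuous dual lattice does not, by itself, deliver the RNP; whether the implication holds when, as here, $X$ is a Banach function space over a $\sigma$-finite measure and the predual $(X')_a$ is itself order continuous is precisely what the theorem asserts, so it cannot be cited. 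The paper fills this gap with a separability argument that your proposal is missing: order continuity of $X$ is used to make $X$ \emph{separable} when the measure space is separable, so that $X$ is a separable dual and the Dunford--Pettis theorem applies; for a non-separable measure space, one reduces to countably generated sub-$\sigma$-algebras, showing that every separable closed subspace $Y\subset X$ embeds into a space $\widetilde X$ that is a separable dual Banach function space over a separable measure space, and then concludes using the fact that a Banach space has the RNP if and only if every separable closed subspace does. Without this separable-reduction idea, your argument does not go through.
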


\begin{proof}
(i) If $X$ is not order continuous then it contains an order isomorphic copy of $\ell_\infty$ \cite[Theorem 14.4, p.220]{AB}. Since $\ell_\infty$ does not have the RNP,  $X$ does not have this property either.

(ii) Suppose that $(X')_a = (X')_b$ and that $X$ is order continuous with the Fatou property. It is well-known that every separable dual space possesses the RNP \cite{DU}.
Since $((X')_a)' = ((X')_b)'$,  $((X')_a)'=(X')' = X''$ and $((X')_a)^*\simeq  ((X')_a)'$ by Corollary 1.4.2 in \cite{BS}. It follows by  the Fatou property that $X'' = X$.
Therefore
\[
((X')_a)' \simeq ((X')_a)^* \simeq X ''=X.
\]
Hence $X$ is the  dual space of $(X')_a$. If the measure space $(\Omega, \Sigma,\mu)$ is separable, then the order continuous space $X$ is also separable by Theorem 2.5.5 in \cite{BS}. Thus in this case, $X$ has the RNP.

Now, suppose that $(\Omega, \Sigma,\mu)$ is not separable and we show that $X$ still has the RNP.  We will use the fact that  a Banach space $X$ satisfies the RNP if and only if every separable closed subspace  $Y\subset X$ has the RNP  \cite[Theorem 2, p. 81]{DU}.

Since $X$ is order continuous  $X=X_a=X_b$. Let $Y\subset X$ be a closed separable subspace of $X$. Then there exists a dense and countable set $\mathcal{Y} \subset Y$. For every $y\in \mathcal{Y} \subset X=X_b$, there exists a sequence of simple functions $(y_n) \subset X$ with supports of finite measure and  such that $\|y-y_n\|_X \to 0$. Each $y_n$ can be expressed as $y_n = \sum_{i=1}^{m_n} a_i^{(n)} \chi_{A_i^{(n)}}$, where $a_i^{(n)}\in \mathbb{R}$, $A_i^{(n)} \in \Sigma$ with $\mu(A_i^{(n)}) < \infty$, so $y \in \overline{span} \{\chi_{A_i^{(n)}}, i=1,\dots,m_n, \ n\in \mathbb{N}\}$. Letting  $\mathcal{A}_y = \{A_i^{(n)}: i=1,\dots,m_n, n\in\mathbb{N}\}$ and $\mathcal{A} = \cup_{ y\in \mathcal{Y}}\mathcal{A}_y$, the family $\mathcal{A}$  is  countable.

For our convenience, let $\mathcal{A} = \{E_i: i\in \mathbb{N}\}$.  For each $i\in \mathbb{N}$ we have $\mu(E_i) < \infty$. Then we have
\[
Y = \overline{\mathcal{Y}}\subset \overline{span} \{\chi_{E_i}, \ E_i \in \mathcal{A}\} \subset X.
\]
Let $\widetilde\Omega = \cup_{i=1}^\infty E_i$, $\sigma(\mathcal{A})$ be the smallest $\sigma$-algebra of $\Omega$ containing $\mathcal{A}$, $\widetilde{\Sigma} = \{\widetilde{\Omega} \cap E: E \in \sigma(\mathcal{A})\}$ and $\widetilde\mu = \mu|_{\widetilde\Sigma}$ the measure $\mu$ restricted to $\widetilde{\Sigma}$. In fact, it is easy to show that $\widetilde{\Sigma} = \sigma(\mathcal{A})$. Hence $\widetilde{\Sigma}$ is generated by a countable set, namely $\mathcal{A}$, so the measure space $(\widetilde\Omega, \widetilde\Sigma,\widetilde\mu)$ is separable  \cite[Theorem B, p. 168]{Hal}). Now  define the set
\[
\widetilde X = \{x\chi_{\widetilde\Omega}: x\in X, \ x  \ \text{is} \  \widetilde{\Sigma} - \text{measurable}\}.
\]
It is straightforward to check that $\widetilde X$ is a closed subspace of $X$ such that it is an order continuous Banach function space on $(\widetilde\Omega,\widetilde\Sigma, \widetilde\mu)$ with the Fatou property. So $\widetilde X$ is separable.   The K\"othe dual of $\widetilde{X}$ is
\[
\widetilde X' := (\widetilde X)' = \{y\chi_{\widetilde\Omega}: y\in X', \ y  \ \text{is} \  \widetilde{\Sigma} - \text{measurable}\}.
\]
Clearly $\widetilde X' \subset L^0(\widetilde\Omega, \widetilde\Sigma, \widetilde\mu)$. From the assumption we have $(\widetilde X')_a = (\widetilde X')_b$. Hence
$\widetilde X = \widetilde{X}'' \simeq ((\widetilde{X}')_a)^*$ by Corollary 1.4.2 in \cite{BS} again. Therefore, $\widetilde X$ is a separable dual space such that $Y \subset \overline{span} \{\chi_{E_i}, \ E_i\in \mathcal{A}\}\subset \widetilde X$, which implies that $\widetilde X$ and hence  $Y$ has the RNP. Since the choice of $Y$ was arbitrary, $X$ has the RNP.
\end{proof}

\begin{Remark}
$(1)$ The Fatou property in (ii) is a necessary assumption. For example, take $X=c_0$. This space does not have the RNP \cite[p. 61]{DU} and clearly does not satisfy the Fatou property.  However, since $(c_0)'= \ell_1$ and $\ell_1$ is order continuous we have $((c_0)')_a = (\ell_1)_a = (\ell_1)_b = ((c_0)')_b$, which is the second assumption of the theorem.

$(2)$ The assumption $(X')_a = (X')_b$ in (ii) is also necessary. Consider $X=L_1[0,1]$ that is clearly order continuous.  Moreover
$(X')_a = (L_\infty[0,1])_a = \{0\}$ and $(X')_b = (L_\infty[0,1])_b = L_\infty[0,1]$. Hence $(X')_a \ne (X')_b$ and it is well-known that $L_1[0,1]$ does not have the RNP \cite[p. 60]{DU}.

\end{Remark}

\begin{Proposition}\label{pro1}
Let $\mu$ be a non-atomic measure and $\varphi$ be a finite  Orlicz function. If $\varphi$ is not an $N$-function at infinity, then $L_\varphi$  contains a subspace isomorphic to $L_1[0,1]$.
\end{Proposition}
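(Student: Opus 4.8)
The plan is to localize to a finite-measure piece of $\Omega$, where the asymptotic linearity of $\varphi$ forces $L_\varphi$ to coincide with $L_1$, and then to locate $L_1[0,1]$ inside the resulting $L_1$-space.

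First I would record the structural consequence of the hypothesis. Since $\varphi$ is convex with $\varphi(0)=0$, the chord slope $u\mapsto\varphi(u)/u$ is non-decreasing on $(0,\infty)$; being finite and \emph{not} an $N$-function at infinity, its limit $p:=\lim_{u\to\infty}\varphi(u)/u$ is finite, and $p>0$ because $\varphi\not\equiv 0$. Two one-sided estimates follow: $\varphi(u)\le pu$ for all $u\ge 0$ (the slope never exceeds $p$), and, fixing any $u_0>0$ with $c:=\varphi(u_0)/u_0>0$, monotonicity gives $\varphi(u)\ge cu$ for all $u\ge u_0$.

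Next I would choose the localization. By $\sigma$-finiteness and non-atomicity (and since $\varphi$ is finite, so $L_\varphi\ne\{0\}$) I pick $\Omega_0\in\Sigma$ with $0<\mu(\Omega_0)<\infty$ and $\mu$ non-atomic on $\Omega_0$. The band $L_\varphi(\Omega_0)=\{f\in L_\varphi:\supp f\subseteq\Omega_0\}$ is a closed subspace of $L_\varphi$ (it is the kernel of $f\mapsto f\chi_{\Omega\setminus\Omega_0}$), and on it the Luxemburg norm depends only on the values on $\Omega_0$, so it is isometric to the Orlicz space over $(\Omega_0,\mu)$. I would then show $L_\varphi(\Omega_0)=L_1(\Omega_0)$ as vector spaces with equivalent norms. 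The estimate $\varphi(u)\le pu$ gives $I_\varphi(\lambda f)\le p\lambda\|f\|_1$, whence $L_1(\Omega_0)\subseteq L_\varphi(\Omega_0)$ and, taking $\epsilon=p\|f\|_1$, the bound $\|f\|_\varphi\le p\|f\|_1$. For the reverse inclusion, if $I_\varphi(\lambda f)<\infty$ for some $\lambda>0$ I split $\Omega_0$ according to $\lambda|f|\ge u_0$ and $\lambda|f|<u_0$: on the first set $\varphi(\lambda|f|)\ge c\lambda|f|$ bounds $\int|f|$ by $\tfrac{1}{c\lambda}I_\varphi(\lambda f)<\infty$, and on the second $\int|f|\le\tfrac{u_0}{\lambda}\mu(\Omega_0)<\infty$ (here finiteness of $\mu(\Omega_0)$ is what rescues the small-value part). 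Hence $L_\varphi(\Omega_0)=L_1(\Omega_0)$, the identity is a continuous bijection between two Banach spaces, and by the open mapping theorem the two norms are equivalent.

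Finally I would exhibit $L_1[0,1]$ inside $L_1(\Omega_0)$. Because $\mu$ is non-atomic on $\Omega_0$, repeated halving of the measure (the intermediate-value property of non-atomic measures, Sierpi\'nski) produces a dyadic tree $\{\Omega_I\}$ matching the dyadic subintervals $I\subseteq[0,1]$, and the closed span of $\{\chi_{\Omega_I}\}$ in $L_1(\Omega_0)$ is, up to the normalizing constant $\mu(\Omega_0)$, isometric to $L_1[0,1]$; crucially this needs no separability of $\mu$. Composing the inclusions $L_\varphi\supseteq L_\varphi(\Omega_0)=L_1(\Omega_0)\supseteq L_1[0,1]$ yields the desired isomorphic copy. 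The two points I expect to require the most care are the norm-equivalence step, where the finiteness of $\mu(\Omega_0)$ is precisely what controls the region where $\varphi$ is no longer comparable to a linear function, and guaranteeing that the embedded space is the full $L_1[0,1]$ rather than merely $\ell_1$ (the latter has the RNP and would not serve the application); both are addressed above.
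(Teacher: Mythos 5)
Your proof is correct, and its skeleton is the same as the paper's: restrict to a measurable set of finite positive measure, show that on it the Orlicz space coincides with $L_1$ with equivalent norms, and then use the fact that $L_1$ of a finite non-atomic measure contains an isomorphic copy of $L_1[0,1]$. The differences are in how the two steps are executed. For the norm equivalence, the paper chooses the set $A$ with $\mu(A)$ small enough that $1/(M\mu(A))\ge u_0$, which makes the lower estimate $M\|x\|_1\le \|x\|_\varphi$ provable by hand --- first for characteristic functions of subsets of $A$, then for simple functions, then for all of $L_1(A)$ via the Fatou property --- and pairs it with the explicit upper estimate $\|x\|_\varphi\le K\|x\|_1$ coming from $\varphi(u)\le Ku$. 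You instead take an arbitrary $\Omega_0$ of finite positive measure, prove only the set equality $L_\varphi(\Omega_0)=L_1(\Omega_0)$ together with the one-sided inequality $\|f\|_\varphi\le p\|f\|_1$, and obtain the reverse bound from the open mapping theorem; this is softer and dispenses with the careful choice of the set, at the cost of invoking a nonconstructive theorem where the paper's computation is elementary and yields explicit constants. For the final step, the paper simply cites Lacey's book, whereas you sketch the standard dyadic-tree construction; your remark that this construction needs no separability of the measure space is pertinent, since removing separability hypotheses is one of the paper's stated aims.
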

\begin{proof}
 Suppose $\varphi$ is not an $N$-function at infinity. We will show that given $A\in\Sigma$  with $\mu(A) < \infty$, the space $L_\varphi(A) = \{x\chi_A: x\in L_\varphi\}$ is equal to $ L_1(A)$ with equivalent norms.   By the assumption $\lim_{u\to\infty} \varphi(u)/u =K< \infty$, and by the fact that
  the function $\varphi(u)/u$ is increasing, there exist $M> 0$ and  $u_0 >0$ such that
 \begin{equation}\label{eq:31}
 \varphi(u) \le Ku \ \ \text{for} \ \ u\ge 0, \ \ \  \text{and} \ \ \ \varphi(u)\ge M u \ \ \ \text{for}  \ \ u \geq u_0.
 \end{equation}
 Let $f\in L_\varphi(A)$ with $\|f\|_\varphi = 1$. Then $\supp f\subset A$ and $I_\varphi(f) \le 1$. Set $A_1 = \{t\in A: |f(t)| \le u_0\}$. Thus in view of the second part of inequality (\ref{eq:31}) we get
 \[
 \|f\|_1 = \int_{A_1} |f| \, d\mu + \int_{A\setminus A_1} |f|\, d\mu \le u_0 \mu(A) + \frac1M I_\varphi(f) \le C,
 \]
 where $C = u_0 \mu(A) + \frac1M.$
 Therefore for any $f$ from $L_\varphi(A)$, $\|f\|_1 \le C \|f\|_\varphi$.

On the other hand, by the second part of inequality (\ref{eq:31}),   for any $E\subset A$ we have
\[
\int_A\varphi\left(\frac{\chi_E}{K\mu(E)}\right)\, d\mu = \int_E\varphi\left(\frac{1}{K\mu(E)}\right) \, d\mu \le 1.
\]
It follows that $\|\chi_E\|_\varphi \le K\mu(E) =K \|\chi_E\|_1$. Hence for any simple function $x = \sum_{i=1}^n a_i \chi_{E_i}\in L_\varphi(A)$ with $E_i \cap E_j = \emptyset$ for $i\ne j$,
\[
\|x\|_\varphi \le \sum_{i=1}^n |a_i| \|\chi_{E_i}\|_\varphi \le K \sum_{i=1}^n |a_i| \mu(E_i) = K \|x\|_1.
\]
Now by the Fatou property of $L_1(A)$ and $L_\varphi(A)$,  $\|x\|_\varphi \le K \|x\|_1$ for every $x \in L_1(A)$.

  Hence $L_\varphi(A) = L_1(A)$ with equivalent norms, and the proof is completed since $L_1(A)$ contains a subspace isomorphic to $L_1[0,1]$  (see \cite[p. 127, Theorem 9 (1)]{Lac}).
\end{proof}

Recall that an Orlicz function is said to be finite if its range does not contain the infinity.
\begin{Lemma}\label{lem:finite}
\rm{(a)}  A finite   Orlicz function  $\varphi$ is an $N$-function at infinity if and only if $\varphi_*$ is finite.

\rm{(b)} Let $\mu$ be a non-atomic measure. If $\varphi$ is a finite Orlicz function  then $\|\chi_A\|_\varphi = 1/\varphi^{-1}(1/t)$, where $t>0$, $\mu(A) = t$. Consequently
\[
\lim_{t\to 0+} \phi_{L_\varphi}(t) = \lim_{t\to 0+}  1/\varphi^{-1}(1/t) = 0.
\]
\end{Lemma}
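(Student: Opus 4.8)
The plan is to treat the two parts separately, with part (a) resting on the monotonicity of the difference quotient $\varphi(v)/v$ and part (b) on a direct evaluation of the Luxemburg norm of an indicator.

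For part (a), I would introduce the asymptotic slope $p := \lim_{v\to\infty} \varphi(v)/v \in (0,\infty]$, which exists because convexity together with $\varphi(0)=0$ forces $v\mapsto \varphi(v)/v$ to be non-decreasing. By definition $\varphi$ is an $N$-function at infinity precisely when $p = \infty$, so it suffices to show that $\varphi_*$ is finite if and only if $p = \infty$. For the ``if'' direction, fix $u > 0$; writing $uv - \varphi(v) = v\bigl(u - \varphi(v)/v\bigr)$ and using $p=\infty$, this quantity tends to $-\infty$ as $v\to\infty$, while it is a finite continuous function of $v$ on $[0,\infty)$ (a finite convex function is continuous on the interior), so its supremum $\varphi_*(u)$ is finite. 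For the converse I argue by contrapositive: if $p < \infty$, then $\varphi(v)/v \le p$ gives $\varphi(v)\le pv$ for all $v$, so for any $u > p$ we get $uv - \varphi(v) \ge (u-p)v \to \infty$, whence $\varphi_*(u) = \infty$ and $\varphi_*$ is not finite.

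For part (b), I would compute directly from the definition of the Luxemburg norm. Since $\chi_A/\epsilon$ takes the single value $1/\epsilon$ on $A$, we have $I_\varphi(\chi_A/\epsilon) = \mu(A)\varphi(1/\epsilon) = t\,\varphi(1/\epsilon)$, so that $\|\chi_A\|_\varphi = \inf\{\epsilon>0 : t\varphi(1/\epsilon) \le 1\}$. Substituting $u = 1/\epsilon$ turns this into maximizing $u$ subject to $\varphi(u)\le 1/t$; the largest such $u$ is $\varphi^{-1}(1/t)$, where $\varphi^{-1}(s) = \sup\{u\ge 0 : \varphi(u)\le s\}$ is the generalized inverse. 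The only point requiring care is checking that this optimal value is actually admissible: left-continuity of $\varphi$ gives $\varphi\bigl(\varphi^{-1}(1/t)\bigr) = \lim_{u\uparrow \varphi^{-1}(1/t)}\varphi(u) \le 1/t$, so $\epsilon = 1/\varphi^{-1}(1/t)$ lies in the feasible set and realizes the infimum, yielding $\|\chi_A\|_\varphi = 1/\varphi^{-1}(1/t)$.

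Finally, for the limit I would note that finiteness of $\varphi$ forces $\varphi$ to be unbounded: since $\varphi$ is convex, nonzero, and finite with $\varphi(0)=0$, there is $u_1$ with $\varphi(u_1)>0$, and then $\varphi(u)\ge (\varphi(u_1)/u_1)u \to \infty$. Hence as $t\to 0^+$ we have $1/t\to\infty$ and therefore $\varphi^{-1}(1/t)\to\infty$, so $\phi_{L_\varphi}(t) = 1/\varphi^{-1}(1/t)\to 0$. I expect the main (though still mild) obstacle to be the bookkeeping around the generalized inverse and the use of left-continuity in part (b), since that is where an off-by-a-discontinuity error could creep in; part (a) and the limit are routine consequences of convexity.
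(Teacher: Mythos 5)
Your proof is correct, and it diverges from the paper's in an interesting way in part (a). In part (b) you follow essentially the paper's computation---evaluating $I_\varphi(\chi_A/\epsilon)=t\,\varphi(1/\epsilon)$ and inverting---but you are more careful than the paper: the paper simply solves $t\,\varphi(1/\epsilon)=1$ (after splitting off the degenerate range $\epsilon \ge 1/a_\varphi$, where $a_\varphi=\sup\{u:\varphi(u)=0\}$), implicitly assuming the equation can be solved, while your generalized inverse $\varphi^{-1}(s)=\sup\{u\ge 0:\varphi(u)\le s\}$ together with left-continuity (automatic here, a finite convex function being continuous on $(0,\infty)$) treats affine pieces and the zero set of $\varphi$ uniformly, and your closing remark that finiteness plus convexity forces $\varphi$ to be unbounded is exactly what makes $\varphi^{-1}(1/t)$ finite and tend to $\infty$, which the paper dismisses with ``clearly.'' The genuine difference is in part (a): both you and the paper prove ``$\varphi$ not an $N$-function at infinity $\implies \varphi_*(v)=\infty$ for large $v$'' by the same slope estimate $\varphi(v)\le Kv$, but for the other direction the paper argues via Fenchel biduality, using $\varphi=\varphi_{**}$ to write $\varphi(u)=\sup\{uv-\varphi_*(v):v\in(0,K)\}$ and conclude $\lim_{u\to\infty}\varphi(u)/u\le K$, whereas you prove directly that $p=\infty$ makes $v\mapsto uv-\varphi(v)$ tend to $-\infty$ while being bounded on compacta, so $\varphi_*(u)<\infty$ for every $u$. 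Your route is more elementary---it never invokes $\varphi_{**}=\varphi$---at the cost of the (easy) continuity/local-boundedness remark; the paper's route gets the quantitative bound on the asymptotic slope for free from the biduality formula.
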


\begin{proof}
(a) Suppose  $\varphi$ is not an $N$-function at infinity. Then there exists $K>0$ such that for every $u >0$, $\varphi(u) \leq Ku$. Hence
\[
\varphi_*(v) = \sup_{u >0}\{uv - \varphi(u)\} \geq \sup_{u >0}\{(v-K)u\}.
\]
Therefore if $v > K$ then $\varphi_*(v) =\sup_{u>0}\{(v-K)u\} = \infty$.

 Conversely, suppose there exists $K > 0$ such that for every $v > K$, $\varphi_*(v) = \infty$. Then
\[
\varphi(u) = \sup\{uv - \varphi_*(v) : v \in (0,K)\}.
\]
By $\frac{\varphi(u)}{u} = \sup\{v - \frac{\varphi_*(v)}{u}: v \in (0, K)\}$, we have $\lim_{u \rightarrow \infty} \frac{\varphi(u)}{u} \leq K < \infty$, which shows that $\varphi$ is not an $N$-function at infinity.

(b) Let $a_\varphi=\sup\{t: \varphi(t)=0\}$, and let  $A\in \Sigma$, $\mu(A) = t$, $t > 0$. Then $I_\varphi(\chi_A/\epsilon) = 0$ if $\epsilon \ge 1/a_\varphi$, and  $I_\varphi(\chi_A/\epsilon) = \varphi(1/\epsilon) t$ if $ \epsilon < 1/a_\varphi$. By the latter condition if $I_\varphi(\chi_A/\epsilon) = 1$, we get that $\|\chi_A\|_\varphi = \epsilon = 1/\varphi^{-1}(1/t)$. Clearly for $t\to 0+$ we get that $1/\varphi^{-1}(1/t) \to 0$.
\end{proof}

The next result provides a criterion of the Radon-Nikod\'ym property of Orlicz spaces over non-atomic measure spaces. We do not need the assumption of separability of the measure space  \cite[Theorem 3.32]{Chen}.

\begin{Theorem}\label{th:OrRN-funct}
Let $\mu$ be a complete $\sigma$-finite, non-atomic measure on $\Sigma$ and $\varphi$ be a finite Orlicz function. Then the Orlicz spaces $L_\varphi$ {\rm (}and  $L_\varphi^0${\rm )} over
$(\Omega, \Sigma, \mu)$ have the Radon-Nikod\'ym property if and only if $\varphi$ is an $N$-function at infinity and  satisfies the appropriate $\Delta_2$ condition.
\end{Theorem}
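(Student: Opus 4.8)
The plan is to reduce both statements to a single application of Theorem~\ref{th:RNKothe}, exploiting that the Luxemburg and Orlicz norms are equivalent (recall $\|\cdot\|_\varphi\le\|\cdot\|_\varphi^0\le 2\|\cdot\|_\varphi$), so that $L_\varphi$ and $L_\varphi^0$ are isomorphic as Banach spaces. Since the RNP is an isomorphic invariant, it suffices to settle the Luxemburg-norm space $X=L_\varphi$, and the statement for $L_\varphi^0$ then follows at once. Throughout I would use that every Orlicz space has the Fatou property, that $X=(L_\varphi)_a$ precisely when $\varphi$ satisfies the appropriate $\Delta_2$ condition, and that $(L_\varphi)'=L_{\varphi_*}^0$, all recorded in the preliminaries.

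For the necessity direction, suppose $X$ has the RNP. By Theorem~\ref{th:RNKothe}(i) the space $X$ is order continuous, i.e. $X=(L_\varphi)_a$, which by the cited characterization forces $\varphi$ to satisfy the appropriate $\Delta_2$ condition. If in addition $\varphi$ failed to be an $N$-function at infinity, then Proposition~\ref{pro1} would furnish a closed subspace of $L_\varphi$ isomorphic to $L_1[0,1]$; since the RNP passes to closed subspaces while $L_1[0,1]$ does not have it, this would contradict the RNP of $X$. Hence $\varphi$ must be an $N$-function at infinity.

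For the sufficiency direction, assume $\varphi$ is an $N$-function at infinity and satisfies the appropriate $\Delta_2$ condition. The $\Delta_2$ condition gives $X=(L_\varphi)_a$, so $X$ is order continuous, and $X$ has the Fatou property. It then remains to verify the hypothesis $(X')_a=(X')_b$ of Theorem~\ref{th:RNKothe}(ii). Here $X'=L_{\varphi_*}^0$, and by Lemma~\ref{lem:finite}(a) the assumption that $\varphi$ is an $N$-function at infinity is exactly what makes $\varphi_*$ finite, so $L_{\varphi_*}$ is a genuine Orlicz space. For a rearrangement invariant space $Z$ over a non-atomic measure, $Z_a=Z_b$ is equivalent to $\lim_{t\to0^+}\phi_Z(t)=0$; applying Lemma~\ref{lem:finite}(b) to the finite function $\varphi_*$ yields $\lim_{t\to0^+}\phi_{L_{\varphi_*}}(t)=\lim_{t\to0^+}1/\varphi_*^{-1}(1/t)=0$, and the norm comparison $\phi_{L_{\varphi_*}}\le\phi_{L_{\varphi_*}^0}\le 2\phi_{L_{\varphi_*}}$ transfers this to the Orlicz-norm fundamental function. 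Thus $(X')_a=(X')_b$, and Theorem~\ref{th:RNKothe}(ii) delivers the RNP of $X$.

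The step I expect to require the most care is the verification of $(X')_a=(X')_b$: the K\"othe dual $(L_\varphi)'$ carries the \emph{Orlicz} norm, whereas Lemma~\ref{lem:finite}(b) computes the fundamental function for the \emph{Luxemburg} norm, so the argument hinges on passing between the two via their equivalence and on invoking the rearrangement-invariant criterion $Z_a=Z_b\iff\lim_{t\to0^+}\phi_Z(t)=0$ in the non-atomic case. The remaining implications are essentially bookkeeping built on top of the cited results.
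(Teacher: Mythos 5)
Your proposal is correct and follows essentially the same route as the paper's own proof: necessity via Theorem~\ref{th:RNKothe}(i) together with Proposition~\ref{pro1}, and sufficiency via Lemma~\ref{lem:finite}(a),(b) applied to $\varphi_*$, the rearrangement-invariant criterion $Z_a=Z_b \iff \lim_{t\to 0^+}\phi_Z(t)=0$, the identification $(L_\varphi)'=L_{\varphi_*}^0$, and Theorem~\ref{th:RNKothe}(ii). Your only addition is to spell out the passage from $\phi_{L_{\varphi_*}}$ to $\phi_{L_{\varphi_*}^0}$ via the two-sided norm equivalence, a step the paper compresses into a single ``Hence.''
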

\begin{proof}
Since the Luxemburg and Orlicz norms are equivalent we consider only $L_\varphi$ equipped with the Luxemburg norm.	By the assumption that $\varphi$ is an $N$-function at infinity and Lemma \ref{lem:finite}(a)  we get that $\varphi_*$ is finite on $(0,\infty)$.
Applying now Lemma \ref{lem:finite}(b) to the function $\varphi_*$ we get that  $\phi_{L_{\varphi_*}}(t) \to 0$ if $t\to 0+$.  Hence $\lim_{t\to 0+} \phi_{L^0_{\varphi_*}}(t) = 0$.   Applying now Theorem 2.5.5 in \cite{BS} we
get $(L_{\varphi_*}^0)_a = (L_{\varphi_*}^0)_b$ and in view of $(L_\varphi)' = L_{\varphi_*}^0$ \cite[Corollary 8.15, p. 275]{BS} \cite{KR},  we have $((L_\varphi)')_a = ((L_\varphi)')_b$.

 It is well-known that $L_\varphi$ has the Fatou property and that $L_{\varphi}$ is order continuous if and only if $\varphi$ satisfies the appropriate $\Delta_2$ condition. Therefore, by Theorem \ref{th:RNKothe}(ii) the Orlicz space $L_\varphi$ has the RNP.

For the converse, assume that $L_\varphi$ has the RNP. Since $L_1[0,1]$ does not have the RNP, $\varphi$ needs to be an $N$-function at infinity by Proposition~\ref{pro1}. If $\varphi$ does not satisfy the appropriate $\Delta_2$ condition, then $L_\varphi$ is not order continuous, and by Theorem \ref{th:RNKothe}(i) it does not have the RNP.
\end{proof}

By Theorem 2.5.4 in \cite{BS}, $X_a = X_b$ holds for every rearrangement invariant sequence space $X$. Consequently we obtain a characterization of the RNP in  Orlicz sequence spaces $\ell_\varphi$ as a consequence  of  Theorem  \ref{th:RNKothe}. This result is well-known for $\varphi$ being an $N$-function \cite[Theorem 3.32]{Chen}.

\begin{Theorem} \label{th:RNP-ORseq}
Let $\varphi$ be a finite Orlicz function. An Orlicz sequence space $\ell_{\varphi}$ has the Radon-Nikod\'ym property if and only if $\varphi$ satisfies the $\Delta_2^0$ condition.
\end{Theorem}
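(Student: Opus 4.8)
The plan is to deduce the statement as a direct specialization of Theorem \ref{th:RNKothe} to the counting-measure setting, using the fact recorded immediately above (Theorem 2.5.4 in \cite{BS}) that $X_a = X_b$ holds automatically for every rearrangement invariant \emph{sequence} space. First I would observe that $\mathbb{N}$ with the counting measure is a $\sigma$-finite measure space, so the machinery of Theorem \ref{th:RNKothe} is available; part (i) is purely lattice-theoretic and applies verbatim, while part (ii) applies because this measure space is moreover separable (the finite subsets of $\mathbb{N}$ form a countable approximating family), so one lands directly in the separable case and avoids the non-separable reduction. I would also recall the two standard facts about Orlicz sequence spaces that I am free to invoke: $\ell_\varphi$ always has the Fatou property, and $\ell_\varphi$ is order continuous if and only if $\varphi$ satisfies the appropriate $\Delta_2$ condition, which in the counting-measure case is exactly $\Delta_2^0$.

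For the necessity of $\Delta_2^0$, I would argue contrapositively: if $\varphi$ fails $\Delta_2^0$ then $\ell_\varphi$ is not order continuous, hence by Theorem \ref{th:RNKothe}(i) it cannot have the RNP. Put the other way, assuming $\ell_\varphi$ has the RNP forces order continuity by (i), and order continuity is precisely $\Delta_2^0$.

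For the sufficiency, assume $\varphi$ satisfies $\Delta_2^0$, so that $\ell_\varphi$ is order continuous, and verify the two hypotheses of Theorem \ref{th:RNKothe}(ii). The Fatou property is automatic. For the condition $((\ell_\varphi)')_a = ((\ell_\varphi)')_b$, I would use that the K\"othe dual is the Orlicz sequence space $(\ell_\varphi)' = \ell_{\varphi_*}^0$, recorded in the preliminaries, which is itself rearrangement invariant; hence Theorem 2.5.4 in \cite{BS} gives $(\ell_{\varphi_*}^0)_a = (\ell_{\varphi_*}^0)_b$, i.e.\ exactly the required equality. Theorem \ref{th:RNKothe}(ii) then yields that $\ell_\varphi$ has the RNP.

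The only point that needs care --- and it is where the r.i.-sequence-space fact does all the work --- is checking $((\ell_\varphi)')_a = ((\ell_\varphi)')_b$. In the non-atomic setting this equality required the fundamental function to vanish at the origin, which forced the separate hypothesis that $\varphi$ be an $N$-function at infinity (via Lemma \ref{lem:finite}(b)); in the counting-measure setting it holds unconditionally, which is exactly why the $N$-function assumption needed in Theorem \ref{th:OrRN-funct} can be dropped here. Once this is in place the proof is a clean two-way application of Theorem \ref{th:RNKothe}, and I do not anticipate any genuine obstacle beyond correctly invoking the sequence-space versions of the background results.
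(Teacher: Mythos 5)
Your proof is correct and follows essentially the same route as the paper's: both directions are deduced from Theorem \ref{th:RNKothe}, with the necessity of $\Delta_2^0$ coming from part (i) via order continuity, and the sufficiency from part (ii) once the key hypothesis $((\ell_\varphi)')_a = ((\ell_\varphi)')_b$ is supplied by the rearrangement invariant sequence-space fact (Theorem 2.5.4 in \cite{BS}) applied to $(\ell_\varphi)' = \ell_{\varphi_*}^0$. Your extra observation that $\mathbb{N}$ with counting measure is a separable measure space, so that the application of Theorem \ref{th:RNKothe}(ii) lands in its separable case, is a point of care the paper leaves implicit.
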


\begin{proof}

Since any Orlicz sequence space is an r.i. space with the Fatou property, we always have $((\ell_{\varphi})')_a = (\ell_{\varphi_*}^0)_a = (\ell_{\varphi_*}^0)_b = ((\ell_{\varphi})')_b$. Moreover  it is well-known that $\ell_{\varphi}$ is order continuous if and only if $\varphi$ satisfies the $\Delta_2^0$ condition  \cite[Proposition 4.a.4]{LT1}. Hence, $\ell_{\varphi}$ has the RNP by Theorem \ref{th:RNKothe}.

Conversely, suppose that $\ell_{\varphi}$ has the RNP. Then $\ell_{\varphi}$ is order continuous by Theorem \ref{th:RNKothe}. This implies that $\varphi$ satisfies the $\Delta_2^0$ condition.

\end{proof}

\section{Locally octahedral norm, uniformly non-$\ell_1^2$ points, diameter two properties and the Daugavet property}

In this section, we first examine the relationship between locally octahedral norms and the Daugavet property.
\begin{Definition}\cite{G, BLR2, HLP}
A Banach space $X$ is locally octahedral if for every $x \in X$ and $\epsilon >0$, there exists $y \in S_X$ such that $\|\lambda x + y\| \geq (1 - \epsilon) (|\lambda| \|x\| + \|y\|)$ for all $\lambda \in \mathbb{R}$.
\end{Definition}

A point $x\in S_X$ is called a \emph{uniformly non-$\ell_1^2$} point if there exists $\delta>0$ such that $\min\{\|x + y\|, \|x - y\|\} \leq 2-\delta$ for all $y\in S_X$. Motivated by this, we introduce the following.

\begin{Definition}
A point $x\in S_X$ is called a \emph{uniformly $\ell_1^2$} point if, given $\delta>0$, there is $y\in S_X$  such that $\min\{\|x + y\|, \|x - y\|\} > 2-\delta$.
\end{Definition}

By  Proposition 2.1 in \cite{HLP} we get immediately the following corollary.

\begin{Corollary}\label{cor:octah-non}
Every point $x \in S_X$ is a uniformly $\ell_1^2$ point if and only if the Banach space $X$ is locally octahedral.
\end{Corollary}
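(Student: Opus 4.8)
The plan is, for each fixed $x\in S_X$, to show that $x$ is a uniformly $\ell_1^2$ point if and only if the local octahedrality inequality holds \emph{at} $x$, namely that for every $\epsilon>0$ there is $y\in S_X$ with $\|\lambda x+y\|\ge(1-\epsilon)(|\lambda|+1)$ for all $\lambda\in\R$ (here $\|x\|=\|y\|=1$). The corollary then follows by a scaling reduction. First I would note that local octahedrality need only be tested on the sphere: at $x=0$ the defining inequality is trivially satisfied by any $y\in S_X$, and for $x\ne 0$ the substitution $u=x/\|x\|$, $\mu=\lambda\|x\|$ converts the condition at $x$ (with $\lambda$ ranging over $\R$) into the identical condition at $u\in S_X$ (with $\mu$ ranging over $\R$). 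Thus $X$ is locally octahedral exactly when the pointwise condition above holds at every $x\in S_X$, so the whole statement reduces to the claimed pointwise equivalence.

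The easy direction (octahedral at $x$ $\Rightarrow$ uniformly $\ell_1^2$) is immediate: given $\delta>0$, apply the octahedrality condition at $x$ with any $\epsilon<\delta/2$ to obtain a witness $y\in S_X$, and specialize to $\lambda=1$ and $\lambda=-1$. Using $\|{-x+y}\|=\|x-y\|$ this gives $\min\{\|x+y\|,\|x-y\|\}\ge 2(1-\epsilon)=2-2\epsilon>2-\delta$, so $x$ is a uniformly $\ell_1^2$ point.

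For the converse I would argue with norming functionals. Fix $\delta>0$ and choose $y\in S_X$ with $\min\{\|x+y\|,\|x-y\|\}>2-\delta$. By Hahn--Banach pick $f,g\in S_{X^*}$ with $f(x+y)=\|x+y\|$ and $g(x-y)=\|x-y\|$. Since $f(x),f(y)\le 1$ and $f(x)+f(y)>2-\delta$, both $f(x)>1-\delta$ and $f(y)>1-\delta$; likewise $g(x)>1-\delta$ and $g(y)<-(1-\delta)$. One then bounds $\|\lambda x+y\|$ from below by testing against $f$ on the positive ray and against $g$ on the negative ray:
\[
\|\lambda x+y\|\ge f(\lambda x+y)=\lambda f(x)+f(y)>(1-\delta)(\lambda+1),\qquad \lambda\ge 0,
\]
and, writing $\lambda=-\mu$ with $\mu\ge 0$,
\[
\|\lambda x+y\|\ge -g(-\mu x+y)=\mu g(x)-g(y)>(1-\delta)(\mu+1)=(1-\delta)(|\lambda|+1).
\]
Taking $\delta=\epsilon$ yields the octahedrality inequality at $x$, with the single witness $y$ valid for all $\lambda\in\R$.

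The main obstacle is precisely this converse step: the uniformly $\ell_1^2$ hypothesis only supplies control at $\lambda=\pm1$, whereas local octahedrality demands a lower bound for every $\lambda\in\R$ with one fixed $y$. The resolution is that although the optimal supporting functional changes as $\lambda$ crosses zero, a \emph{single} vector $y$ still works: one uses $f$ for $\lambda\ge 0$ and $g$ for $\lambda\le 0$, exploiting that the norm dominates every functional value. The remaining ingredients—the homogeneity reduction to $S_X$ and the $\delta$-versus-$\epsilon$ bookkeeping—are routine.
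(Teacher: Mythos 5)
Your proof is correct, but it is genuinely more self-contained than the paper's. The paper gives no argument at all: it deduces Corollary \ref{cor:octah-non} ``immediately'' from Proposition 2.1 of \cite{HLP}, whose quantitative core is reproduced in the paper as Lemma \ref{lem:aux} (if $\|x\pm y\|>2-\delta$ then $(1-\delta)(|\alpha|+|\beta|)<\|\alpha x\pm\beta y\|$ for all scalars $\alpha,\beta$). Your converse step --- choosing norming functionals $f,g\in S_{X^*}$ for $x+y$ and $x-y$, extracting $f(x),f(y),g(x),-g(y)>1-\delta$, and then testing $\lambda x+y$ against $f$ when $\lambda\ge 0$ and against $g$ when $\lambda\le 0$ --- is precisely a duality proof of the $\beta=1$ case of that lemma, which is all the corollary needs; the cited argument in \cite{HLP} reaches the same bound by triangle-inequality splitting (e.g.\ writing $\alpha x+\beta y=\alpha(x+y)-(\alpha-\beta)y$ when $\alpha\ge\beta\ge 0$) rather than by Hahn--Banach, but the two are equally short. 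Your homogeneity reduction of local octahedrality to points of $S_X$ and the easy direction (specializing to $\lambda=\pm1$) are the routine steps implicit in the paper's citation, and your bookkeeping with strict versus non-strict inequalities is sound. In short: the paper's route buys brevity by outsourcing the work to \cite{HLP}, which it must quote anyway since Lemma \ref{lem:aux} is reused in the proof of Theorem \ref{th:unif}; your route buys a complete, reference-free proof at the cost of half a page.
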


\begin{Lemma}\cite{HLP}\label{lem:aux}
If $x, y \in S_X$ satisfy $\|x \pm y\| > 2 - \delta$ and $\alpha, \beta \in \mathbb{R}$, then
	\[
	(1-\delta)(|\alpha| + |\beta|) < \|\alpha x \pm \beta y\| \leq |\alpha| + |\beta|.
	\]
\end{Lemma}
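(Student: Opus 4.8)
The upper bound is nothing more than the triangle inequality: $\|\alpha x \pm \beta y\| \le |\alpha|\,\|x\| + |\beta|\,\|y\| = |\alpha| + |\beta|$ since $x,y\in S_X$. All the content is in the lower bound, and the plan is to feed the two hypotheses into supporting functionals. Since $\|z\| = \sup_{f\in S_{X^*}} f(z)$ for every $z\in X$, the inequality $\|x+y\| > 2-\delta$ guarantees some $f\in S_{X^*}$ with $f(x+y) > 2-\delta$; because $f(x)\le 1$ and $f(y)\le 1$, this forces both $f(x) > 1-\delta$ and $f(y) > 1-\delta$. Applying the same reasoning to $\|x-y\| > 2-\delta$ yields $g\in S_{X^*}$ with $g(x) > 1-\delta$ and $-g(y) > 1-\delta$.

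With the four functionals $\pm f,\pm g$ in hand, I would finish by a case analysis on the signs of $\alpha$ and $\beta$, always bounding $\|\alpha x + \beta y\|$ below by the value of a single well-chosen functional. When $\alpha,\beta\ge 0$, evaluate at $f$, so that $\|\alpha x+\beta y\| \ge \alpha f(x) + \beta f(y) > (1-\delta)(|\alpha|+|\beta|)$. When $\alpha,\beta\le 0$, use $-f$; when $\alpha\ge 0\ge\beta$, use $g$ (so the $y$-term picks up $-g(y) > 1-\delta$); and when $\alpha\le 0\le\beta$, use $-g$. In every case the two summands are at least $(1-\delta)|\alpha|$ and $(1-\delta)|\beta|$, giving the claimed strict lower bound. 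The conclusion for $\alpha x - \beta y$ requires no separate argument, since it is the statement for $\alpha x + \beta y$ with $\beta$ replaced by $-\beta$, and the hypothesis $\|x\pm y\|>2-\delta$ is symmetric under that replacement.

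I do not expect a serious obstacle; the heart of the matter is the elementary observation that a functional nearly norming $x+y$ must nearly norm $x$ and $y$ separately. The only points needing a little care are the bookkeeping that matches each sign pattern to the correct functional among $\pm f,\pm g$—note that the same-sign cases consume the hypothesis $\|x+y\|>2-\delta$ while the opposite-sign cases consume $\|x-y\|>2-\delta$, so both halves of the hypothesis are genuinely used—and the degenerate case $\alpha=\beta=0$, for which the strict inequality is read as vacuous, strictness being preserved whenever $(\alpha,\beta)\neq(0,0)$ because then at least one summand is strictly positive.
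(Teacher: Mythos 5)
Your proof is correct. It is worth noting that the paper does not actually prove this lemma: it defers to the proof of the implication (iii) $\Rightarrow$ (ii) of Proposition 2.1 in \cite{HLP}, and the argument there is purely metric rather than dual. In that approach one reduces by symmetry to $\alpha\ge\beta\ge 0$ (replacing $y$ by $-y$ interchanges the two hypotheses $\|x+y\|>2-\delta$ and $\|x-y\|>2-\delta$, and an overall change of sign is harmless), writes $\alpha x+\beta y=\alpha(x+y)-(\alpha-\beta)y$, and applies the triangle inequality: $\|\alpha x+\beta y\|\ge\alpha\|x+y\|-(\alpha-\beta)>\alpha(2-\delta)-(\alpha-\beta)=\alpha+\beta-\alpha\delta\ge(1-\delta)(\alpha+\beta)$, the last inequality because $\delta\beta\ge 0$. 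Your route instead passes through $X^*$: a unit functional that nearly norms $x+y$ (resp.\ $x-y$) must nearly norm $x$ and $y$ (resp.\ $x$ and $-y$) individually, and each sign pattern of $(\alpha,\beta)$ is then handled by one of $\pm f,\pm g$. Both arguments are complete and short; the metric one is more elementary in that it never invokes Hahn--Banach, while yours has the merit of running on exactly the same mechanism as the paper's proof of Theorem \ref{th:unif} (functionals nearly norming $x\pm y$ produce elements of slices), so it fits naturally into the surrounding material. Finally, you are right to flag the degenerate pair $\alpha=\beta=0$: the strict inequality as stated cannot hold there, this blemish is present in the statement itself (both in the paper and in \cite{HLP}), and your reading --- strictness whenever $(\alpha,\beta)\neq(0,0)$ --- is the intended one.
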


\begin{proof} See the proof of the implication  from (iii) to (ii) in Proposition  2.1 in \cite{HLP}.

\end{proof}

In the next theorem we give a local characterization of uniformly $\ell_1^2$ points $x\in S_X$ (resp. $x^*\in S_{X^*}$) and the diameter of the slice $S(x;\epsilon)$ (resp.  the diameter of the weak$^{*}$-slice $S(x^*,\epsilon)$). The techniques used in the proof are somewhat similar to the proof of Theorem 3.1 in \cite{HLP}, but the key ideas are more subtle emphasizing  the local nature of  discussed properties.

It follows a corollary on relationships between  global properties of local diameter two property in $X$ and of $X^*$  being locally octahedral, as well as between the weak$^*$-local diameter two property of $X^*$ and $X$ being locally octahedral.

\begin{Theorem}\label{th:unif}
\rm(a) An element 	$x \in S_{X}$ is a uniformly $\ell_1^2$ point if and only if the diameter of a weak$^{*}$-slice $S(x;\epsilon)$ is two for every $\epsilon > 0$.

\rm(b) An element $x^* \in S_{X^*}$ is a uniformly $\ell_1^2$ point if and only if ${\rm{diam}}\,S(x^*;\epsilon)=2$ for every $\epsilon > 0$.
\end{Theorem}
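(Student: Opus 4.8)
The plan is to prove both directions of each part by unpacking the definitions of uniformly $\ell_1^2$ point and of slice diameter, using Lemma~\ref{lem:aux} to transfer the two-sided norm estimate $\|x\pm y\|>2-\delta$ into control of norms of linear combinations. I would treat part (a) first, since the weak$^*$-slice lives in $B_{X^*}$ and the defining functional $x\in S_X$ is a single element, which keeps the reflexivity issues at bay; part (b) is the formally dual statement and will follow the same template with $X$ and $X^*$ interchanged.

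For the forward direction of (a), suppose $x\in S_X$ is a uniformly $\ell_1^2$ point and fix $\epsilon>0$. I want to exhibit two functionals in $S(x;\epsilon)$ whose difference has norm close to $2$. Given $\delta>0$ pick $y\in S_X$ with $\min\{\|x+y\|,\|x-y\|\}>2-\delta$. The idea is to choose norming functionals $x_1^*,x_2^*\in S_{X^*}$ with $x_1^*(x+y)>2-\delta$ and $x_2^*(x-y)>2-\delta$; then each $x_i^*(x)>1-\delta$, so both lie in $S(x;\delta)$, while $x_1^*(y)>1-\delta$ and $x_2^*(y)<-(1-\delta)$ force $(x_1^*-x_2^*)(y)>2-2\delta$, giving $\|x_1^*-x_2^*\|>2-2\delta$. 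Matching $\delta$ to $\epsilon$ shows $\operatorname{diam}S(x;\epsilon)=2$ (the reverse bound $\le 2$ being automatic since the slice sits in $B_{X^*}$). For the converse, assume every weak$^*$-slice $S(x;\epsilon)$ has diameter two. Fix $\delta>0$; choose $x_1^*,x_2^*\in S(x;\epsilon)$ with $\|x_1^*-x_2^*\|$ near $2$, so there is $y\in S_X$ with $(x_1^*-x_2^*)(y)$ near $2$, whence $x_1^*(y)$ near $1$ and $x_2^*(y)$ near $-1$. Combining with $x_i^*(x)>1-\epsilon$ yields $x_1^*(x+y)$ and $x_2^*(x-y)$ both near $2$, so $\|x+y\|$ and $\|x-y\|$ both exceed $2-\delta$ after calibrating $\epsilon$; this makes $x$ a uniformly $\ell_1^2$ point.

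The main obstacle I anticipate is the quantitative bookkeeping that converts the three independent tolerances (the $\delta$ in the uniformly $\ell_1^2$ definition, the $\epsilon$ in the slice, and the gap from $2$ in the diameter) into a single consistent choice, especially tracking that the same $y$ serves both estimates and that the witnessing functionals genuinely land inside the prescribed slice rather than merely in $B_{X^*}$. Lemma~\ref{lem:aux}, applied with suitable $\alpha,\beta$, is the tool I expect to streamline the step where I must bound $\|x_1^*-x_2^*\|$ from below using only the two one-sided inequalities $x_i^*(x\pm y)>2-\delta$, and it also guarantees the clean upper bounds that prevent the diameter from exceeding $2$.

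For part (b), I would run the identical argument with the roles reversed: a uniformly $\ell_1^2$ point $x^*\in S_{X^*}$ and the slice $S(x^*;\epsilon)\subset B_X$ determined by $x^*$. Here the norming elements must be chosen in $S_X$ (rather than in a bidual), and the construction of two points $x_1,x_2\in S(x^*;\epsilon)$ with $\|x_1-x_2\|$ near $2$ uses a $y^*\in S_{X^*}$ with $\|x^*\pm y^*\|>2-\delta$ together with approximate norming vectors; since $x^*$ is a genuine element of $X^*$, no weak$^*$ approximation subtlety arises and the symmetry with (a) is exact. I would remark explicitly that the two parts are not literally dual statements—(a) concerns a weak$^*$-slice while (b) concerns an ordinary slice—so each must be verified separately, but the estimates are structurally the same.
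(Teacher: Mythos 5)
Your proposal is correct, and one of its two halves takes a genuinely different route from the paper. The direction ``$\operatorname{diam} S(x;\epsilon)=2$ for all $\epsilon$ $\Rightarrow$ $x$ is uniformly $\ell_1^2$'' is essentially identical to the paper's: pick $x_1^*,x_2^*$ in the slice with $\|x_1^*-x_2^*\|$ near $2$, evaluate the difference at a nearly norming $y\in S_X$, and combine $x_1^*(y)>1-\epsilon$, $-x_2^*(y)>1-\epsilon$ with the slice conditions to get $\|x\pm y\|>2-2\epsilon$. In the converse direction, however, the paper defines biorthogonal functionals on $\mathrm{span}\{x,y\}$ (with $x_1^*(x)=1$, $x_1^*(y)=0$, $x_2^*(x)=0$, $x_2^*(y)=1$), invokes Lemma~\ref{lem:aux} to obtain $\|x_1^*\pm x_2^*\|\le(1-\epsilon)^{-1}$, normalizes the sum and difference, verifies these normalized functionals lie in the slice and are nearly $2$ apart, and finally extends by Hahn--Banach; you instead take $x_1^*,x_2^*\in S_{X^*}$ norming $x+y$ and $x-y$ directly. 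Your computation closes cleanly: from $x_1^*(x+y)>2-\delta$ and $x_1^*(x)\le 1$, $x_1^*(y)\le 1$ one gets $x_1^*(x)>1-\delta$ and $x_1^*(y)>1-\delta$ simultaneously (similarly $x_2^*(x)>1-\delta$ and $x_2^*(y)<-(1-\delta)$), so both functionals lie in $S(x;\epsilon)$ once $\delta\le\epsilon$, and $\|x_1^*-x_2^*\|\ge(x_1^*-x_2^*)(y)>2-2\delta$. This is shorter and eliminates Lemma~\ref{lem:aux}, the two-dimensional span construction, the normalization corrections, and the final extension step; in fact your anticipated reliance on Lemma~\ref{lem:aux} to bound $\|x_1^*-x_2^*\|$ from below is unnecessary, since evaluation at $y$ already does it. What the paper's longer construction buys is essentially fidelity to the octahedrality machinery of \cite{HLP} (functionals with exactly prescribed values on $x$ and $y$); for the statement at hand your argument is equally rigorous. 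Your treatment of (b) is also sound: there exact norm attainment by elements of $B_X$ is unavailable, but since $\|x^*\pm y^*\|>2-\delta$ is a strict inequality, approximate norming elements $x_1,x_2\in B_X$ with $(x^*+y^*)(x_1)>2-\delta$ and $(x^*-y^*)(x_2)>2-\delta$ suffice, and the rest of the computation is the same.
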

\begin{proof}
We will prove only (a) since (b) follows analogously. Suppose that for all $0<\epsilon < 1$, ${\rm{diam}} \, S(x; \epsilon) = 2$. Then there exist $x_1^*, x_2^*\in S(x;\epsilon)$ such that
	\begin{equation}\label{eq:11}
	x_1^*(x)> 1-\epsilon, \ \ \ x_2^* (x) > 1 - \epsilon, \ \ \|x_1^* - x_2^*\| > 2- \epsilon.
	\end{equation}
	Hence we can find $y \in S_X$ with $(x_1^* - x_2^*)(y) > 2 - \epsilon$.
Thus
	\[
	2 \ge x_1^*(y) - x_2^*(y) > 2-\epsilon \ \ \ \text{and} \ \ \ x_1^*(y)\le1,\  -x_2^*(y) \le 1,
	\]
	and so $x_1^*(y) > 1-\epsilon$ and $-x_2^*(y) > 1-\epsilon$. Combining this with (\ref{eq:11}) we get that $x_1^*(x+y) > 2-2\epsilon$, $x_2^*(x-y) > 2 - 2\epsilon$, and so $\|x + y\| > 2 - 2\epsilon$ and $\|x - y\| > 2 -2\epsilon$. We showed that for every $0< \epsilon< 1$ there exists $y\in S_X$ such that
	\[
	\min\{\|x + y\|, \|x - y\|\} > 2 -2\epsilon,
	\]
	which means that $x$ is a uniformly $\ell_1^2$ point.
	
	Conversely, suppose that $x \in S_X$ is a uniformly $\ell_1^2$ point. Then for any $\epsilon>0$, there exists $y \in S_X$ such that $\|x \pm y\| > 2 - \epsilon$. Define bounded linear functionals $x_1^*, x_2^*$ on the subspace  ${\rm span}\{x,y\}$ such that
	\[
	x_1^*(x) = 1,\ \ x_1^*(y) = 0,\ \ x_2^*(x) = 0 \ \  \text{and} \ \ x_2^*(y) = 1.
	\]
	\noindent Note that $\|x_1^*\| \geq 1$ and $\|x_2^*\| \geq 1$. By Lemma \ref{lem:aux}, for $\alpha, \beta \in \mathbb{R}$ we have
	\[
	|(x_1^* \pm x_2^*)(\alpha x + \beta y)| = |\alpha \pm \beta| \leq |\alpha| + |\beta| \leq (1-\epsilon)^{-1}\|\alpha x + \beta y\|,
	\]
	
	\noindent so $\|x_1^* \pm x_2^*\| \leq (1-\epsilon)^{-1}$.
	
	Now, let $\widetilde{x_1}^* = \frac{x_1^* + x_2^*}{\|x_1^* + x_2^*\|}$ and $\widetilde{x_2}^* = \frac{x_1^* - x_2^*}{\|x_1^* - x_2^*\|}$. Then
	\[
	\|\widetilde{x_1}^* - (x_1^* + x_2^*)\| = |\|x_1^* + x_2^*\| - 1| \leq \left|\frac{1}{1-\epsilon} - 1 \right| = \frac{\epsilon}{1 - \epsilon}.
	\]
	Similarly,
	\[
	\|\widetilde{x_2}^* - (x_1^* - x_2^*)\| \leq \frac{\epsilon}{1 - \epsilon}.
	\]
	 Since $(x_1^* \pm x_2^*)(x)=1$, we have $\widetilde{x_1}^*(x) = \frac{1}{\|x_1^* + x_2^*\|} \geq 1 - \epsilon$ and $\widetilde{x_2}^*(x) = \frac{1}{\|x_1^* - x_2^*\|} \geq 1 - \epsilon$. Hence $\widetilde{x_1}^*, \widetilde{x_2}^* \in S(x,\epsilon)$. Furthermore,
	\begin{eqnarray*}
	\|\widetilde{x_1}^* - \widetilde{x_2}^*\| &=& \|\widetilde{x_1}^*  + (x_1^* + x_2^*) - (x_1^* + x_2^*) + (x_1^* - x_2^*) - (x_1^* - x_2^*) - \widetilde{x_2}^*\| \\
	&\geq& 2 \|x_2^*\| -\|\widetilde{x_1}^* - (x_1^* + x_2^*)\| -\|\widetilde{x_2}^* - (x_1^* - x_2^*)\| \geq 2 - \frac{2\epsilon}{1-\epsilon}.	
	\end{eqnarray*}
\noindent Since $\epsilon > 0$ is arbitrary, ${\rm{diam}}\, S(x, \epsilon) = 2$. Finally by the Hahn-Banach theorem, we can extend the bounded linear functionals $x_1^*$ and $x_2^*$ from ${\rm span}\{x,y\}$ to $X$ and  the proof is completed.
\end{proof}

Combining Corollary \ref{cor:octah-non} and Theorem \ref{th:unif} we obtain the following result proved earlier in  \cite{HLP}.

\begin{Corollary} \cite[Theorem 3.2, 3.4]{HLP}\label{HLP} Let $X$ be a Banach space. Then the following hold.
	\begin{enumerate}
		\item[$(1)$] $X$ is locally octahedral if and only if $X^*$ satisfies the weak$^{*}$ local diameter two property.
		\item[$(2)$] $X^*$ is locally octahedral if and only if $X$ satisfies the local diameter two property.
	\end{enumerate}
		
\end{Corollary}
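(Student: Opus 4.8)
The plan is simply to chain the two ingredients just established, taking care to match the relevant family of slices with the corresponding diameter-two property exactly as they appear in the definitions of Section 1. All the genuine work has already been carried out in Theorem \ref{th:unif} and Corollary \ref{cor:octah-non}, so the corollary reduces to bookkeeping about which side of the duality each property lives on.

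For part $(1)$ I would start from the left-hand side. By Corollary \ref{cor:octah-non}, $X$ is locally octahedral if and only if every $x \in S_X$ is a uniformly $\ell_1^2$ point. I then apply Theorem \ref{th:unif}(a) pointwise: each such $x$ is a uniformly $\ell_1^2$ point exactly when the weak$^*$-slice $S(x;\epsilon)$ has diameter two for every $\epsilon > 0$. The only remaining observation is definitional: by the definition of a weak$^*$-slice, every weak$^*$-slice of $B_{X^*}$ is of the form $S(x;\epsilon)$ for some $x \in S_X$ and $\epsilon > 0$, so the requirement that all of these have diameter two is precisely the weak$^*$-LD2P of $X^*$. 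Quantifying over all $x \in S_X$ therefore yields the equivalence in $(1)$.

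For part $(2)$ I would run the identical argument one level up, applying Corollary \ref{cor:octah-non} to $X^*$ in place of $X$: the space $X^*$ is locally octahedral if and only if every $x^* \in S_{X^*}$ is a uniformly $\ell_1^2$ point. Theorem \ref{th:unif}(b) then converts this, for each fixed $x^*$, into the statement that $\mathrm{diam}\, S(x^*;\epsilon) = 2$ for every $\epsilon > 0$. Since every slice of $B_X$ is of the form $S(x^*;\epsilon)$ for some $x^* \in S_{X^*}$ and $\epsilon>0$, quantifying over all $x^* \in S_{X^*}$ gives exactly the LD2P of $X$, completing $(2)$.

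There is no serious technical obstacle here; the one point that warrants care is keeping straight which object parametrizes each property. The local diameter two property of $X$ is governed by functionals $x^* \in S_{X^*}$ (via slices of $B_X$), whereas the weak$^*$-version for $X^*$ is governed by points $x \in S_X$ (via weak$^*$-slices of $B_{X^*}$). As long as the pointwise characterizations of Theorem \ref{th:unif} are paired with the correct dual object, both equivalences fall out immediately.
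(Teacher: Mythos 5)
Your proposal is correct and is exactly the paper's argument: the paper states this corollary as an immediate consequence of combining Corollary \ref{cor:octah-non} with Theorem \ref{th:unif}, which is precisely the chaining (plus the definitional bookkeeping about which dual object parametrizes each family of slices) that you carry out.
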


Recall the equivalent geometric interpretation of the Daugavet property.

\begin{Lemma}\cite[Lemma 2.2]{KSSW}
	\label{lem:Daug}
	The following are equivalent.
	\begin{enumerate}[{\rm(i)}]
		\item A Banach space $(X,\|\cdot\|)$ has the Daugavet property,
		\item\label{Daugii} For every slice $S = S(x^*,\epsilon)$ where $x^*\in S_{X^*}$, every $x \in S_X$ and every $\epsilon>0$, there exists $y\in S_{X}\cap S$ such that $\|x+y\|>2-\epsilon$,
		\item\label{Daugiii} For every weak$^{*}$-slice $S^* = S(x,\epsilon)$ where $x\in S_{X}$, every $x^* \in S_{X^*}$ and every $\epsilon>0$, there exists $y^*\in S_{X^*}\cap S^*$ such that $\|x^*+y^*\|>2-\epsilon$,
	\end{enumerate}
\end{Lemma}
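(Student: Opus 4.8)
The plan is to reduce the Daugavet property to a single elementary identity attached to the rank-one operators $T=x^*\otimes x$, with $x\in S_X$ and $x^*\in S_{X^*}$, where $(x^*\otimes x)(z)=x^*(z)\,x$, and then to read that identity off in $X$ and in $X^*$. For such a $T$ (which has norm one) the Daugavet equation $\|I+T\|=1+\|T\|$ reads
\[
\|I+T\|=\sup_{z\in B_X}\|z+x^*(z)\,x\|=2 .
\]
The whole argument rests on the observation that this single supremum governs both (ii) and (iii): evaluating $\|I+T\|$ inside $X$ yields the primal slice condition, while evaluating the equal quantity $\|I_{X^*}+T^*\|$ inside $X^*$ yields the weak$^*$-slice condition. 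In the forward implications I use that the Daugavet property forces the identity above for every such $T$; in the converse implications I show that (ii) (resp. (iii)) forces $\|I+S\|=1+\|S\|$ for \emph{every} rank-one $S$.

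For the equivalence of (i) and (ii), in the forward direction I fix $x\in S_X$ and a slice $S(x^*;\epsilon)$ and apply the equation above to $T=x^*\otimes x$. This produces $z\in B_X$ with $\|z+x^*(z)\,x\|>2-\eta$; since $\|z+x^*(z)x\|\le\|z\|+|x^*(z)|\le 2$, the estimate forces $|x^*(z)|>1-\eta$. Replacing $z$ by $-z$ if necessary (which does not change $\|z+x^*(z)x\|$) I may assume $x^*(z)>1-\eta$, so $z\in S(x^*;\epsilon)$ once $\eta\le\epsilon$, and then $\|x+z\|\ge\|z+x^*(z)x\|-|1-x^*(z)|>2-2\eta$. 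For the converse I start from an arbitrary rank-one $S=\phi\otimes w$ with $\phi\in S_{X^*}$ and $\|w\|=c$, apply (ii) to $x=w/c$ and the slice $S(\phi;\epsilon)$ to obtain $z$ in the slice with $\|w/c+z\|>2-\epsilon$, pick a norm-one functional $f$ nearly norming $w/c+z$, and estimate $\|z+\phi(z)w\|\ge f(z)+\phi(z)\,f(w)>(1-\epsilon)+c(1-\epsilon)^2$; letting $\epsilon\to0$ gives $\|I+S\|\ge 1+c$, so the Daugavet equation holds for every rank-one operator, not merely the norm-one ones.

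For (iii) I transport the same operator to the dual. Since $(I_X+T)^*=I_{X^*}+T^*$ and adjoints are isometric, $\|I_{X^*}+T^*\|=\|I_X+T\|$, and the adjoint of $T=x^*\otimes x$ is the \emph{weak$^*$-continuous} rank-one operator $\psi\mapsto\psi(x)\,x^*$. Thus the Daugavet equation becomes $\sup_{\psi\in B_{X^*}}\|\psi+\psi(x)\,x^*\|=2$, and running the extraction of the previous paragraph in $X^*$ gives a functional $y^*=\psi$ with $|\psi(x)|>1-\eta$ and $\|x^*+\psi\|>2-2\eta$. After the same sign adjustment, $\psi$ lies in the \emph{weak$^*$}-slice $S(x;\epsilon)$ of $B_{X^*}$---and this is a genuine weak$^*$-slice precisely because its defining element $x$ belongs to the predual $X$. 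The converse again feeds (iii) back into the identity and closes the loop.

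I expect the main obstacle to be explaining why (iii) may be phrased with weak$^*$-slices rather than with arbitrary slices of $B_{X^*}$. The point is structural: the operators that the Daugavet property delivers on $X^*$ are only the adjoints $T^*$, that is, the weak$^*$-continuous rank-one operators, whose test elements come from $X$ itself; an arbitrary slice of $B_{X^*}$ need not be a weak$^*$-slice, so it is essential to track that the functional defining each dual slice lives in the predual. The second, minor, place requiring care is the sign bookkeeping in both forward directions, where one must replace the extracted vector by its reflection to land inside the prescribed slice. The norm inequalities themselves are routine triangle-inequality estimates.
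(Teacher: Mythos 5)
Your proof is correct, but note that the paper itself contains no argument for this lemma: it is quoted, with citation, as Lemma 2.2 of \cite{KSSW}, so the only meaningful comparison is with that source --- and your argument is essentially the classical one given there. Reducing the Daugavet property to the identity $\sup_{z\in B_X}\|z+x^*(z)\,x\|=2$ for the norm-one rank-one operators $T=x^*\otimes x$, extracting a near-maximizing $z$, observing that $|x^*(z)|>1-\eta$ is forced and that the sign flip $z\mapsto -z$ leaves $\|z+x^*(z)x\|$ unchanged, recovering the Daugavet equation for \emph{all} rank-one $S=\phi\otimes w$ via a nearly norming functional (so that $\|I+S\|\ge f(z)+\phi(z)f(w)\to 1+\|S\|$), and passing to $X^*$ through the isometry $\|I_{X^*}+T^*\|=\|I_X+T\|$ together with the remark that $T^*$ is weak$^*$-continuous (which is exactly why item (iii) involves weak$^*$-slices rather than arbitrary slices of $B_{X^*}$) is precisely the KSSW mechanism. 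Two caveats. First, the lemma as printed in this paper is garbled: its (ii) and (iii) mispair the slices with the test vectors and omit the requirement that the produced element lie in the given slice. What you proved are the intended statements --- for every slice of $B_X$ and every $x\in S_X$ there is $y$ \emph{in that slice} with $\|x+y\|>2-\epsilon$, and dually for weak$^*$-slices of $B_{X^*}$ --- which is the form in which the lemma is actually invoked in the proof of Proposition \ref{prop:slice}, so your reading is the right one. Second, your converse (iii)$\Rightarrow$(i) is only gestured at (``feeds (iii) back into the identity''); it does go through, by applying (iii) to $x=w/\|w\|$ and $x^*=\phi/\|\phi\|$ for $S=\phi\otimes w$, choosing $z\in B_X$ nearly norming $\phi/\|\phi\|+\psi$, and estimating $\|\psi+\psi(w)\phi\|\ge\psi(z)+\psi(w)\phi(z)\ge(1-\epsilon)+(1-\epsilon)^2\|\phi\|\,\|w\|$, but in a final write-up this dual computation should be displayed, since it is the one step where the fact that $S^*$ is determined by an element of the predual is actually used.
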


 The next result is known in a stronger form \cite[Theorem 4.4]{ALN} \cite[Corollary 2.5]{BLR2} \cite{HLP}, namely, if $X$ has the Daugavet property then it has the SD2P as well as its dual $X^*$ has the weak$^*$-SD2P.

\begin{Proposition}\label{prop:slice}
If a Banach space $X$ has the Daugavet property, then $X$ has the local diameter two property and $X^*$ has the weak$^*$-local diameter two property.
\end{Proposition}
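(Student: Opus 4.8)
The plan is to establish the two assertions separately, each by a direct slice argument built on the geometric form of the Daugavet property recorded in Lemma \ref{lem:Daug}. One could instead route through Corollary \ref{HLP}, reducing the statement to ``$X$ and $X^*$ are locally octahedral''; but proving local octahedrality directly from the Daugavet property requires producing a single direction $y$ that is simultaneously good for $x$ and for $-x$, whereas the slice argument below is more economical. The key manoeuvre in both halves is a \emph{sign flip}: the Daugavet characterization supplies, inside a prescribed slice, a point $y$ with $\|x+y\|$ close to $2$, and applying it to $-x$ converts this into a point $y$ of the same slice at distance close to $2$ from a chosen reference point of that slice, which is exactly what a diameter-two estimate needs.

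For the local diameter two property of $X$, I would fix a slice $S(x^*;\epsilon_0)$ of $B_X$ with $x^*\in S_{X^*}$ and show $\mathrm{diam}\,S(x^*;\epsilon_0)=2$. For each $0<\epsilon\le\epsilon_0$, since $\|x^*\|=1$ I first pick $x_0\in S_X$ with $x^*(x_0)>1-\epsilon$, so that $x_0\in S(x^*;\epsilon)$. Applying Lemma \ref{lem:Daug} to the norm-one vector $-x_0$ and the functional $x^*$ yields $y\in S(x^*;\epsilon)$ with $\|-x_0+y\|>2-\epsilon$, that is $\|x_0-y\|>2-\epsilon$. Both $x_0$ and $y$ lie in $S(x^*;\epsilon)\subseteq S(x^*;\epsilon_0)$, so $\mathrm{diam}\,S(x^*;\epsilon_0)\ge 2-\epsilon$. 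Letting $\epsilon\to0^+$ and using $\mathrm{diam}\,S(x^*;\epsilon_0)\le 2$ gives the desired equality, whence $X$ has the LD2P.

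The weak$^*$-local diameter two property of $X^*$ is proved in the dual language with the companion form of Lemma \ref{lem:Daug}. Fixing a weak$^*$-slice $S(x;\epsilon_0)$ of $B_{X^*}$ with $x\in S_X$, for $0<\epsilon\le\epsilon_0$ I choose $x_0^*\in S_{X^*}$ with $x_0^*(x)>1-\epsilon$ and apply the dual characterization to $-x_0^*$ to obtain $y^*\in S(x;\epsilon)$ with $\|x_0^*-y^*\|>2-\epsilon$; then $x_0^*,y^*\in S(x;\epsilon)\subseteq S(x;\epsilon_0)$ force $\mathrm{diam}\,S(x;\epsilon_0)=2$ as $\epsilon\to0^+$. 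The only genuine point requiring care --- the main obstacle --- is that the diameter estimate needs \emph{both} endpoints to lie in the given slice, so it is essential that Lemma \ref{lem:Daug} delivers its vector $y$ (respectively functional $y^*$) \emph{inside} the prescribed (weak$^*$-)slice and not merely in $S_X$ (respectively $S_{X^*}$); once this is in hand, the sign-flip identity and the limiting argument over $\epsilon\le\epsilon_0$ are routine.
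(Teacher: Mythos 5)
Your proof is correct and is essentially the paper's own argument: both rest on Lemma \ref{lem:Daug} to produce, inside a given (weak$^*$-)slice, a point at distance nearly $2$ from a chosen reference point of that slice via a sign flip (the paper picks $x$ with $-x^*(x)>1-\epsilon$ and applies the lemma to $x$, whereas you pick $x_0$ in the slice and apply the lemma to $-x_0$ --- the same manoeuvre in a different order). Your explicit handling of the nested slices $S(x^*;\epsilon)\subseteq S(x^*;\epsilon_0)$ and the limit $\epsilon\to 0^+$ is a welcome tightening of a step the paper leaves implicit, but it is not a different method.
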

\begin{proof} Let $x^*\in S_{X^*}$ and $S(x^*; \epsilon)$ be a slice of $B_X$. Then there is $x\in S_X$ such that $-x^*(x) > 1- \epsilon$. By (iii) of Lemma \ref{lem:Daug} we find $y\in S_X$ with $x^*(y) > 1-\epsilon$ and $\|x + y\| > 2 - 2\epsilon$. Clearly $-x, y \in S(x^*;\epsilon)$, and so ${\rm diam} \, S(x;\epsilon) = 2$.

Now let $x\in S_X$ and $S(x; \epsilon)$ be a weak$^*$-slice of $B_{X^*}$. There exists $y^*\in S_{X^*}$ such that $-y^*(x) > 1 - \epsilon$. By (ii) of Lemma \ref{lem:Daug} there is $x^* \in S_{X^*}$ with $x^*(x) > 1 - \epsilon$ and $\|x^* + y^*\| > 2 - 2\epsilon$. Since both $x^*, -y^* \in S(x;\epsilon)$, $\epsilon > 0$ is arbitrary, we have that ${\rm diam} \, S(x,\epsilon) = 2$.

\end{proof}

The next result is an instant corollary of Theorem \ref{th:unif} and Proposition \ref{prop:slice}.

\begin{Corollary}\cite[Proposition 4.4]{KK}
	\label{prop}
If $(X,\|\cdot\|)$ has the Daugavet property, then  all elements in $S_X$ and $S_{X^*}$ are uniformly $\ell_1^2$ points.
\end{Corollary}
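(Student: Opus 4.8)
The plan is to derive this as a direct composition of two results already established in the excerpt: Proposition \ref{prop:slice}, which says that the Daugavet property forces the local diameter two property on $X$ and the weak$^*$-local diameter two property on $X^*$, and Theorem \ref{th:unif}, which translates diameter-two statements about slices into the assertion that points are uniformly $\ell_1^2$. The key observation is that these two results fit together exactly: Proposition \ref{prop:slice} produces the diameter-two hypotheses that Theorem \ref{th:unif} consumes.

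First I would handle the points of $S_X$. Proposition \ref{prop:slice} gives that $X$ has the local diameter two property, meaning every slice of $B_X$ has diameter two. I need to connect this to the weak$^*$-slices appearing in Theorem \ref{th:unif}(a). The weak$^*$-local diameter two property of $X^*$, also supplied by Proposition \ref{prop:slice}, states precisely that every weak$^*$-slice $S(x;\epsilon)$ of $B_{X^*}$ has diameter two for every $\epsilon>0$. By Theorem \ref{th:unif}(a), this is equivalent to each $x\in S_X$ being a uniformly $\ell_1^2$ point, which is the first half of the conclusion.

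Next I would treat the points of $S_{X^*}$ symmetrically. The local diameter two property of $X$ from Proposition \ref{prop:slice} says every slice $S(x^*;\epsilon)$ of $B_X$ has diameter two for every $\epsilon>0$. Applying Theorem \ref{th:unif}(b), this is equivalent to each $x^*\in S_{X^*}$ being a uniformly $\ell_1^2$ point, giving the second half. Combining the two cases completes the proof.

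I do not anticipate a genuine obstacle here, since both ingredients are already in place; the only point requiring care is bookkeeping, namely matching the correct notion of slice (slice of $B_X$ versus weak$^*$-slice of $B_{X^*}$) to the correct part of Theorem \ref{th:unif}, so that the equivalences are invoked with the right roles of $X$ and $X^*$. In writing it out I would simply cite Proposition \ref{prop:slice} to obtain the LD2P of $X$ and the weak$^*$-LD2P of $X^*$, then invoke the two equivalences in Theorem \ref{th:unif} to conclude that all of $S_X$ and all of $S_{X^*}$ consist of uniformly $\ell_1^2$ points.
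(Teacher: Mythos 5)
Your proposal is correct and follows exactly the paper's intended argument: the paper states this result as an instant corollary of Theorem \ref{th:unif} and Proposition \ref{prop:slice}, which is precisely the composition you carry out. Your matching of ingredients is the right one --- the weak$^*$-LD2P of $X^*$ feeds Theorem \ref{th:unif}(a) to handle $S_X$, and the LD2P of $X$ feeds Theorem \ref{th:unif}(b) to handle $S_{X^*}$.
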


Next we shall consider Orlicz spaces $L_{\varphi}, \,\ell_\varphi$ and $L_{\varphi}^0, \, \ell_\varphi^0$.   Let us define first the following numbers related to  Orlicz function $\varphi:\mathbb{R}_+ \to [0,\infty]$. Recall the Orlicz function $\varphi$  is called  a  {\it linear function} if $\varphi(u)= ku$ on $\mathbb{R}_+$ for some $k>0$. Set

\[
 d_\varphi =\sup\{u: \varphi(u)\ \ \text{is linear}\}, \ \  c_\varphi = \sup\{u: \varphi(u) \le 1\},\ \ \
  b_\varphi = \sup\{u: \varphi(u) < \infty\}.
 \]

\begin{Lemma} \cite[Lemma 4.1]{KK} \label{lem:1}
Let $\varphi$ be an Orlicz function  For every closed and bounded inteval $I \subset (d_{\varphi}, b_{\varphi})$ there is a constant $\sigma \in (0,1)$ such that $2 \varphi(u/2)/\varphi(u) \leq \sigma$ for $u \in I$. Moreover, if $\varphi(b_\varphi) < \infty$ then the same statement holds true for closed intervals $I \subset (d_\varphi, b_\varphi]$.
\end{Lemma}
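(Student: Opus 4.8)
The plan is to reduce the whole statement to the elementary convexity inequality $2\varphi(u/2)\le\varphi(u)$, to identify exactly when it is strict, and then to upgrade the strict pointwise bound to a uniform one over the compact interval by a continuity and compactness argument.

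First I would record the pointwise inequality. Since $\varphi$ is convex with $\varphi(0)=0$, for every $u$ with $\varphi(u)<\infty$ we have $\varphi(u/2)=\varphi(\tfrac12 u+\tfrac12\cdot0)\le\tfrac12\varphi(u)+\tfrac12\varphi(0)=\tfrac12\varphi(u)$, hence $2\varphi(u/2)\le\varphi(u)$ always. The crucial step is the characterization of equality: a convex function that agrees with the chord over an interval at the midpoint of that interval must coincide with the chord on the entire interval. Applying this on $[0,u]$, whose chord through $(0,0)$ and $(u,\varphi(u))$ has midpoint value $\tfrac12\varphi(u)$, the equality $2\varphi(u/2)=\varphi(u)$ holds if and only if $\varphi$ is linear (affine) on $[0,u]$, which by the definition of $d_\varphi$ is equivalent to $u\le d_\varphi$. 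Consequently $2\varphi(u/2)<\varphi(u)$ for every $u>d_\varphi$; moreover $\varphi(u)>0$ for such $u$, because $\varphi(u)=0$ would force $\varphi\equiv0$, hence linear, on $[0,u]$ and thus $u\le d_\varphi$.

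Next I would pass to uniformity. On the open interval $(d_\varphi,b_\varphi)$ the function $\varphi$ is finite, so being convex it is continuous there and satisfies $\varphi(u)>0$, while $\varphi(u/2)$ is also finite; hence $g(u):=2\varphi(u/2)/\varphi(u)$ is continuous on $(d_\varphi,b_\varphi)$ with $g(u)<1$ pointwise. For a closed bounded $I\subset(d_\varphi,b_\varphi)$ the restriction $g|_I$ attains its maximum by compactness, and $\sigma:=\max_{u\in I}g(u)<1$ is the desired constant (if $(d_\varphi,b_\varphi)=\emptyset$ the claim is vacuous). For the \emph{moreover} part, when $\varphi(b_\varphi)<\infty$ the value $g(b_\varphi)=2\varphi(b_\varphi/2)/\varphi(b_\varphi)$ is well defined and, since any nondegenerate $I\subset(d_\varphi,b_\varphi]$ forces $b_\varphi>d_\varphi$, the midpoint argument gives $g(b_\varphi)<1$; continuity of $g$ up to $b_\varphi$ follows from $\varphi(b_\varphi)<\infty$ together with the left-continuity of $\varphi$, which yield $\lim_{u\to b_\varphi^-}\varphi(u)=\varphi(b_\varphi)$, the argument $u/2$ staying in the interior. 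The compactness step then applies verbatim to $I\subset(d_\varphi,b_\varphi]$.

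I expect the main obstacle to be the rigorous treatment of the equality case, namely proving that $2\varphi(u/2)=\varphi(u)$ forces $\varphi$ to be linear on all of $[0,u]$, and the careful handling of the right endpoint $b_\varphi$, where the finiteness $\varphi(b_\varphi)<\infty$ and the left-continuity of $\varphi$ must be combined to secure continuity of $g$ up to $b_\varphi$; once these are in place, the extraction of $\sigma$ via compactness is routine.
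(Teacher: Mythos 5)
Your proof is correct; the paper itself offers no argument for this lemma (it is imported from \cite{KK}), and your route --- the strict midpoint inequality $2\varphi(u/2)<\varphi(u)$ for $u>d_\varphi$ obtained from the equality case of convexity, upgraded to a uniform $\sigma<1$ by continuity of $u\mapsto 2\varphi(u/2)/\varphi(u)$ on $(d_\varphi,b_\varphi)$ and compactness of $I$, with left-continuity of $\varphi$ handling the endpoint $b_\varphi$ in the ``moreover'' part --- is the standard argument. The one step you flag but leave unproved, namely that $2\varphi(u/2)=\varphi(u)$ forces $\varphi$ to coincide with the chord on all of $[0,u]$, is routine and worth writing out: if $\varphi(t_0)$ were strictly below the chord at some $t_0\in(0,u)$, then expressing $u/2$ as a convex combination of $t_0$ and $u$ (when $t_0<u/2$) or of $0$ and $t_0$ (when $t_0>u/2$) gives $\varphi(u/2)<\varphi(u)/2$, a contradiction; note also that only this direction (equality implies $u\le d_\varphi$) is needed, so no attainment of the supremum defining $d_\varphi$ has to be established.
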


\begin{Theorem}\label{th:KamKub}
\begin{enumerate}[{\rm(1)}]
\item[\rm(i)] Let $\mu$ be non-atomic. Let $\varphi$ be an Orlicz function such that  $\varphi(b_\varphi)\mu(\Omega) >1$ and $d_\varphi < b_\varphi$. Then there exists $a> 0$ and $A \in \Sigma$ such that $x = a \chi_A, \|x\|_{\varphi} =1$ and $x$ is uniformly non-$\ell_1^2$ point in $L_{\varphi}$.  If $b_\varphi = \infty$ then $x\in (L_\varphi)_a$.

\item[\rm(ii)] Let $\mu$ be the counting measure on $\mathbb{N}$ and $\varphi$ be an Orlicz function such that $d_\varphi < c_\varphi$ and $\varphi(c_\varphi) = 1$. Then there exist $a > 0$ and $A \subset \mathbb{N}$ such that $x = a \chi_A, \|x\|_{\varphi} =1$ and $x$ is uniformly non-$\ell_1^2$ point in $\ell_{\varphi}$. If $b_\varphi = \infty$ then $x\in (\ell_\varphi)_a$.
\end{enumerate}
\end{Theorem}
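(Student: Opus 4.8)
The plan is to realize the desired vector as a normalized characteristic function $x=a\chi_A$ whose level $a$ lies strictly inside the range $(d_\varphi,b_\varphi)$ where $\varphi$ is genuinely convex, and then to extract a \emph{uniform modular gap} for the pair $\tfrac{x\pm y}{2}$. First I would construct $x$. In case (i), since $\varphi(b_\varphi)\mu(\Omega)>1$ and $d_\varphi<b_\varphi$, left-continuity and monotonicity of $\varphi$ let me pick $a\in(d_\varphi,b_\varphi)$ \emph{strictly below} $b_\varphi$ with $\varphi(a)\mu(\Omega)>1$; by non-atomicity choose $A\in\Sigma$ with $\mu(A)=1/\varphi(a)<\mu(\Omega)$, so that $I_\varphi(x)=\varphi(a)\mu(A)=1$ and, as $\varphi(a)<\infty$, $\|x\|_\varphi=1$. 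When $b_\varphi=\infty$, $\varphi$ is finite everywhere, so $I_\varphi(\lambda x)<\infty$ for all $\lambda$ and $x\in(L_\varphi)_a$. In case (ii) I take a single atom $A=\{k\}$ and $a=c_\varphi$; then $\varphi(c_\varphi)=1$ gives $I_\varphi(x)=1$ and $\|x\|_\varphi=1$, while $d_\varphi<c_\varphi$ places $a$ in the strictly convex range. In both cases I fix a closed bounded interval $I\subset(d_\varphi,b_\varphi)$ with $a$ in its interior and invoke Lemma \ref{lem:1} to get $\sigma\in(0,1)$ with $2\varphi(a/2)\le\sigma\varphi(a)$.

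The core estimate is a pointwise \emph{deficit} on $A$. For $t\in A$ (where $x=a$) and $s=|y(t)|$, set
\[
\mathrm{def}(s)=\varphi(a)+\varphi(s)-\varphi\!\left(\tfrac{a+s}{2}\right)-\varphi\!\left(\tfrac{|a-s|}{2}\right),
\]
a mixed second difference of the convex $\varphi$, hence nonnegative. It is \emph{strictly positive for every} $s\ge0$: at $s=0$ it equals $\varphi(a)-2\varphi(a/2)\ge(1-\sigma)\varphi(a)>0$ because $a>d_\varphi$, and strict convexity past $d_\varphi$ propagates this to $s\in(0,a)$; for $s\ge a$ the elementary bound $\varphi(\lambda u)\le\lambda\varphi(u)$ ($\lambda\le1$) gives $\varphi(\tfrac{a+s}{2})+\varphi(\tfrac{s-a}{2})\le\varphi(s)$, so $\mathrm{def}(s)\ge\varphi(a)$. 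Continuity on the compact $[0,a]$ then yields $c_1:=\inf_{s\ge0}\mathrm{def}(s)>0$. Combining with the trivial $\varphi(|y|)-2\varphi(|y|/2)\ge0$ on $A^c$ and integrating the identity $\tfrac{x\pm y}{2}=\tfrac12 x\pm\tfrac12 y$, I obtain exactly
\[
I_\varphi\!\left(\tfrac{x+y}{2}\right)+I_\varphi\!\left(\tfrac{x-y}{2}\right)=I_\varphi(x)+I_\varphi(y)-\!\int_{A^c}\!\big[\varphi(|y|)-2\varphi(\tfrac{|y|}{2})\big]-\!\int_A \mathrm{def}(|y|)\le 2-c_1\mu(A),
\]
using $I_\varphi(x)=1$ and $I_\varphi(y)\le1$. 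Thus $\min\{I_\varphi(\tfrac{x+y}{2}),I_\varphi(\tfrac{x-y}{2})\}\le 1-\tfrac12 c_1\mu(A)$, a gap $\gamma_0:=c_1\mu(A)>0$ that is uniform in $y\in S_X$.

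The remaining step, and the one I expect to be the main obstacle, is converting this modular gap into a uniform \emph{norm} gap $\min\{\|x+y\|_\varphi,\|x-y\|_\varphi\}\le 2-\delta$, since without a $\Delta_2$ hypothesis one cannot freely rescale modulars. I would run the same computation at scale $\beta=2-\delta$: writing $\tfrac{x\pm y}{\beta}=\tfrac1\beta x\pm\tfrac1\beta y$ and using $\varphi(\lambda u)\le\lambda\varphi(u)$ on $A^c$ and on the tail $s\ge a$, the scale-$\beta$ deficit $\mathrm{def}_\beta(s)=\tfrac2\beta\varphi(a)+\tfrac2\beta\varphi(s)-\varphi(\tfrac{a+s}{\beta})-\varphi(\tfrac{|a-s|}{\beta})$ converges to $\mathrm{def}(s)$ as $\beta\uparrow2$, uniformly on the relevant bounded range of $s$ by joint continuity, so $\inf_s\mathrm{def}_\beta(s)\to c_1>0$. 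This gives $I_\varphi(\tfrac{x+y}{\beta})+I_\varphi(\tfrac{x-y}{\beta})\le \tfrac4\beta-\gamma_\beta$ with $\gamma_\beta\to\gamma_0$, and since $\tfrac4\beta-2=\tfrac{2\delta}{2-\delta}\to0$, for all small $\delta>0$ the right side is $\le2$, forcing one of the two modulars to be $\le1$ and hence one of the norms $\le2-\delta$; this is the uniform non-$\ell_1^2$ estimate. The delicate points are finiteness of the rescaled arguments: I would use that $a<b_\varphi$ strictly (arranged in case (i)) so that for $\beta$ close to $2$ every value $\tfrac{a+|y|}{\beta}$ stays below $b_\varphi$ whenever $I_\varphi(y)\le1$; and in the sequence case, should the endpoint $a=c_\varphi=b_\varphi$ occur, I would exploit that only the \emph{minimum} of the two modulars is needed, so that if $y_k\approx a$ makes $I_\varphi(\tfrac{x+y}{\beta})$ infinite, the symmetric term with $\tfrac{a-y_k}{\beta}\approx0$ remains small and controls the minimum.
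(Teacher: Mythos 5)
Your scale--$2$ deficit estimate is correct, and it is a genuinely different (and in one respect cleaner) route than the paper's. The paper never forms a pointwise deficit function: for each individual $y$ it produces a set $B$ and a level $d<b_\varphi$ (both depending on $y$) with $|y|\le d$ on $A\cap B$ and $\mu(A\cap B)>0$, applies Lemma \ref{lem:1} only on $A\cap B$ after splitting it by the sign of $xy$, and carries the remainder along in the constant $\gamma=I_\varphi(a\chi_{A\setminus B})$. Your formulation --- $\mathrm{def}(s)\ge\varphi(\min(a,s))$ by superadditivity of convex functions vanishing at $0$, strict positivity on $[0,a)$ because equality in superadditivity at an interior point would force $\varphi$ to be linear on $[0,a]$ (impossible since $a>d_\varphi$; this is the precise version of your ``propagation'' remark), plus compactness of $[0,a]$ --- yields the modular gap $\min_\pm I_\varphi\bigl(\tfrac{x\pm y}{2}\bigr)\le 1-\tfrac12 c_1\mu(A)$ with a constant visibly independent of $y$, whereas in the paper's bookkeeping $d$, $B$, $\sigma$, $\gamma$ all depend on $y$ and uniformity has to be extracted from it.

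The genuine problem is in your scale--$\beta$ step, and it is exactly where you predicted the obstacle. The bound $\varphi\bigl(\tfrac{a+s}{\beta}\bigr)+\varphi\bigl(\tfrac{s-a}{\beta}\bigr)\le\tfrac{2}{\beta}\varphi(s)$ comes from writing each argument as $\lambda s$ with $\lambda\le 1$; at scale $\beta<2$ this needs $\tfrac{a+s}{\beta}\le s$, i.e. $s\ge\tfrac{a}{\beta-1}$, \emph{not} $s\ge a$. On the band $a\le s<\tfrac{a}{\beta-1}$ the inequality is unavailable, and without $\Delta_2$ the term $\varphi\bigl(\tfrac{a+s}{\beta}\bigr)$ can vastly exceed $\tfrac{2}{\beta}\varphi(s)$ there, so ``$\inf_s \mathrm{def}_\beta(s)\to c_1$'' is not justified as written; moreover when $b_\varphi=\infty$ the range of $s$ is unbounded, so ``the relevant bounded range'' must be identified before joint continuity can be invoked. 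The repair is within your own toolkit: for $\beta\ge 3/2$ the bad band lies in $[a,2a]$, so run the convexity bound only on $s\ge\tfrac{a}{\beta-1}$ (where it gives $\mathrm{def}_\beta(s)\ge\tfrac{2}{\beta}\varphi(a)$) and the uniform-convergence argument on the compact set $[0,\tfrac{a}{\beta-1}]\subset[0,2a]$, on which $\inf \mathrm{def}>0$. Note that the paper sidesteps this entire difficulty: it never rescales $y$. It inflates only the fixed element, $z=(1+\epsilon)x$, with $\epsilon$ so small that $I_\varphi(z)\le 1+\delta$ (this uses continuity of $\varphi$ at the single point $a<b_\varphi$), proves $\min_\pm I_\varphi\bigl(\tfrac{z\pm y}{2}\bigr)\le 1$ at scale $2$, hence $\bigl\|\tfrac{z\pm y}{2}\bigr\|_\varphi\le 1$ for one sign, and then converts via the triangle inequality,
\[
\left\|\tfrac{x\pm y}{2}\right\|_\varphi\le\left\|\tfrac{x\pm y/(1+\epsilon)}{2}\right\|_\varphi+\tfrac{\epsilon}{2(1+\epsilon)}\le 1-\tfrac{\epsilon}{2(1+\epsilon)},
\]
so no $\Delta_2$-type control of $y$ is ever needed. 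Finally, in case (ii) your choice $a=c_\varphi$ may force $a=b_\varphi$, and then $\tfrac{a+s}{\beta}$ can exceed $b_\varphi$, making one modular infinite; your closing remark (use the sign for which the argument is $\tfrac{|a-s|}{\beta}$) is the right idea, but it must be carried out as a separate estimate, since the sum-of-two-modulars inequality is vacuous when one term is infinite.
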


\begin{proof}

(i): By the assumptions on $\varphi$ and non-atomicity of $\mu$, there exist $A\in \Sigma$ and $a\in (d_\varphi,b_\varphi)$  such that $\varphi(a) \mu(A) = 1$.
 Letting $x=a \chi_A$, we get $I_{\varphi}(x) =1$, and  $\|x\|_{\varphi} = 1$. Clearly $x\in (L_\varphi)_a$ if $b_\varphi = \infty$.

Let $y \in S_{L_\varphi}$ be arbitrary. Hence  for a.e. $t\in \Omega$, $|y(t)| < b_\varphi$ if $b_\varphi=\infty$ or $|y(t)| \le b_\varphi$ if $b_\varphi<\infty$. Then, for any $\lambda >1$, $I_{\varphi}({y}/{\lambda}) \leq 1$. We claim that
\begin{equation}\label{cond1}
\text{there exist }\, d\in (a,b_\varphi) \,\, \text{and} \,\, B=\{t \in \Omega : |y(t)| \leq  d \chi_A (t)\} \,\, \text{such that} \,\, \mu(A \cap B) >0.
\end{equation}

Indeed, let first $b_\varphi = \infty$. Define $B_k = \{t \in \Omega : |y(t)| \leq  k \chi_A (t)\}$ for $k \in  \mathbb{N}$. The sequence of sets  $\{B_k\}$ is increasing, and so
$0 < \mu(A) = \mu (A \cap (\cup_{k=1}^{\infty} B_k) = \lim_{k \rightarrow \infty} \mu (A \cap B_k)$, and this implies that there exists $m \in \mathbb{N}$ such that $2a <m$, $\mu (A \cap B_{m}) >0$. Letting $B = B_{m}$, $d=m$, we get (\ref{cond1}).

Let now $b_\varphi < \infty$. Define $C_k = \{t \in \Omega : |y(t)| \leq  (b_\varphi - 1/k) \chi_A (t)\}$ for $k \in  \mathbb{N}$. Like before, $\{C_k\}$ is increasing and $\lim_{k \rightarrow \infty} \mu(A \cap C_k)>0$. So there exists $m$ such that $b_\varphi - 1/m > a$. Let now $d = b_\varphi - 1/m$ and $B = C_m$, and so (\ref{cond1}) is satisfied.

Set
\begin{equation}\label{eq:111}
 \gamma = I_{\varphi} (a \chi_{A \setminus  B}).
 \end{equation}
  Clearly $\gamma\in [0,1)$.
For any $\delta>0$, there exists $1>\epsilon>0$ such that $I_{\varphi}((1+ \epsilon)x) = \varphi((1+\epsilon)a)\mu(A)  \leq 1 + \delta$. We can choose $\epsilon$ so small that we also have $(1+\epsilon)a < d$.
Let $z = (1+\epsilon) x = (1 + \epsilon)a \chi_A$. Thus
\begin{equation}\label{eq:11}
I_\varphi(z) \le 1+\delta.
\end{equation}
Define
\[
D = \{t \in A \cap B : x(t)y(t) \geq 0 \},\,\, E = (A \cap B) \setminus D.
\]
 For $t \in A \cap B$, $\max \{ |z(t)|, |y(t)| \} = \max \{ |(1+\epsilon)a|, |y(t)| \} \in [a,d]$.
 Since $D \subset A \cap B$, we have $|z(t) - y(t)|/2 \le \max\{|z(t)|, |y(t)|\}$ for $t\in D$. Moreover by Lemma \ref{lem:1}, there exists $\sigma \in (0,1)$ such that $2\varphi(u/2)/\varphi(u) \le \sigma$ for $u\in [a,d] \subset (d_{\varphi}, b_{\varphi})$. Therefore
\begin{eqnarray*}
I_{\varphi}\left(\frac{z-y}{2} \chi_D \right) \leq I_{\varphi}\left(\frac{\max\{|z|, |y| \}}{2} \chi_D \right) &\leq& \frac{\sigma}{2} I_{\varphi}(\max\{|z|, |y| \}\chi_D)\\
&\leq& \frac{\sigma}{2}(I_{\varphi}(z \chi_D)+ I_{\varphi}(y \chi_D)).
\end{eqnarray*}

\noindent Analogously we can also show that
\[
I_{\varphi} \left(\frac{z+y}{2} \chi_E \right) \leq \frac{\sigma}{2}(I_{\varphi}(z \chi_E)+ I_{\varphi}(y \chi_E)).
\]
 Then, by the convexity of $\varphi$ and  $A \cap B = D\cup E$,
\begin{eqnarray*}
&\,&I_{\varphi}\left(\frac{z-y}{2} \chi_{A \cap B} \right)  + I_{\varphi} \left(\frac{z+y}{2} \chi_{A \cap B} \right)\\ &=&  I_{\varphi} \left(\frac{z-y}{2} \chi_D \right) + I_{\varphi} \left(\frac{z+y}{2} \chi_D \right) + I_{\varphi} \left(\frac{z-y}{2} \chi_E \right) + I_{\varphi} \left(\frac{z+y}{2} \chi_E \right)\\
&\leq& \frac{\sigma}{2}(I_{\varphi}(z\chi_D) + I_{\varphi}(y\chi_D)) + \frac{1}{2}(I_{\varphi}(z\chi_D) + I_{\varphi}(y\chi_D))+ \frac{1}{2}(I_{\varphi}(z\chi_E) + I_{\varphi}(y\chi_E)) \\
&+& \frac{\sigma}{2}(I_{\varphi}(z\chi_E) + I_{\varphi}(y\chi_E)
= \frac{1+ \sigma}{2}(I_{\varphi}(z \chi_{A \cap B}) + I_{\varphi}(y \chi_{A \cap B})).
\end{eqnarray*}

Now, choose $\delta \in (0, \frac{(1-\sigma)(1- \gamma)}{2})$.  By the assumption  $I_{\varphi}(y) \leq 1$  and by (\ref{eq:111}), (\ref{eq:11})  we have
\[
2+ \delta \geq I_{\varphi}(y) + 1 + \delta \geq I_{\varphi}(y) + I_{\varphi}(z), 
\]
and so
\begin{eqnarray*}
2+ \delta - I_{\varphi}\left(\frac{z+y}{2} \right) - I_{\varphi}\left(\frac{z-y}{2}\right) &\geq& I_{\varphi}(y)+ I_{\varphi}(z) - I_{\varphi}\left(\frac{z+y}{2}\right) - I_{\varphi}\left(\frac{z-y}{2}\right)\\
&\geq& I_{\varphi}(y)+ I_{\varphi}(z) - \frac{1+ \sigma}{2}(I_{\varphi}(z \chi_{A \cap B}) + I_{\varphi}(y \chi_{A \cap B}))\\
&\geq& \frac{1- \sigma}{2}(I_{\varphi}(z \chi_{A \cap B}) + I_{\varphi}(y \chi_{A \cap B}))\\
&\geq& \frac{1- \sigma}{2}I_{\varphi}(a \chi_{A \cap B}) = \frac{(1- \sigma)(1-\gamma)}{2},
\end{eqnarray*}

\noindent which implies that
\[
I_{\varphi}\left(\frac{z+y}{2} \right) + I_{\varphi}\left(\frac{z-y}{2} \right) \leq 2+ \delta - \frac{(1- \sigma)(1-\gamma)}{2} \le 2.
\]
It follows
\[
\min \left \{I_{\varphi}\left(\frac{z+y}{2} \right), I_{\varphi}\left(\frac{z-y}{2} \right) \right\} \leq 1.
\]
If $I_{\varphi}\left(\frac{z+y}{2}\right) \leq 1$, then $\left \| \frac{z+y}{2} \right \|_{\varphi} \leq 1$,  and so $\left \| \frac{x+(y/(1+\epsilon))}{2} \right \|_{\varphi} \leq \frac{1}{1+\epsilon}$. Moreover,

\begin{equation*}
\left | \left\| \frac{x+y}{2}\right \|_{\varphi} - \left \|\frac{x+(y/(1+\epsilon))}{2} \right \|_{\varphi} \right | \leq \left \| \frac{x+y}{2} - \frac{x+(y/(1+\epsilon))}{2} \right \|_{\varphi} = \frac{\epsilon}{2(1+\epsilon)}.
\end{equation*}
Hence
\[
\left \| \frac{x+y}{2} \right \|_{\varphi} \leq \left \|\frac{x+(y/(1+\epsilon))}{2} \right \|_{\varphi} + \frac{\epsilon}{2(1+\epsilon)} \leq \frac{1}{1+ \epsilon} + \frac{\epsilon}{2(1+ \epsilon)} = 1 - \frac{\epsilon}{2(1+\epsilon)}.
\]
 In a similar way, if $I_{\varphi} \left(\frac{z-y}{2} \right) \leq 1$, then $\left \| \frac{x-y}{2} \right \|_{\varphi} \leq 1 - \frac{\epsilon}{2(1+\epsilon)}$. Thus, we just showed that  for any $y \in S_{L_\varphi}$, $\min \left\{ \left \| \frac{x+y}{2} \right \|_{\varphi}, \left \| \frac{x-y}{2} \right \|_{\varphi} \right \} \leq 1 - \frac{\epsilon}{2(1+\epsilon)}$, which means

\begin{equation*}
\min \{ \| x+y\|_{\varphi}, \| x-y\|_{\varphi}\} \leq 2 - \frac{\epsilon}{1+ \epsilon} < 2.
\end{equation*}
Therefore, $x= a \chi_A$, $\|x\|_{\varphi} = 1$ is a uniformly non-$\ell_1^2$ point in $L_\varphi$.

\vspace{2mm}

(ii): If $x \in S_{\ell_{\varphi}}$, then $I_\varphi(x) = \sum_{i=1}^{\infty} \varphi(|x(i)|) \leq 1$. So for every $i \in \mathbb{N}$, $\varphi(|x(i)|) \leq 1$. Hence for any element of $S_{\ell_\varphi}$, we only consider $u \geq 0$ such that $\varphi(u)\leq 1$. Then by the assumptions $1= \frac{1}{\varphi(c_\varphi)}< \frac{1}{\varphi(d_\varphi)}$, there exist $a \in (d_{\varphi}, c_\varphi]$ and $A \subset \mathbb{N}$ such that $\varphi(a) = 1/\mu(A)$. Let $x = a\chi_A$. Then $I_\varphi(x) = \varphi(a)\mu(A) = 1$ and $\|x\|_\varphi = 1$.  If $b_\varphi = \infty$ then $x \in (\ell_\varphi)_a$.

Now for $y \in S_{\ell_\varphi}$, we want to show that there exists $d \in (a, c_\varphi)$ and $B = \{i \in \mathbb{N} : |y(i)| \leq d\}$ such that $\mu(A \cap B) > 0$, which corresponds to (\ref{cond1}) in function case.

Since $y$ is in the unit ball of $\ell_\varphi$, for each $i\in \mathbb{N}$, $|y(i)| \le c_\varphi$.
 Define $C_k = \{i \in \mathbb{N} : |y(i)| \leq  (c_\varphi - 1/k) \chi_A (i)\}$ for $k \in  \mathbb{N}$. The sequence  $\{C_k\}$ is increasing and
 \[
 0 < \mu (A) = \mu (A \cap (\cup_{k=1}^{\infty} C_k) = \lim_{k \rightarrow \infty} \mu (A \cap C_k).
 \]
  So there exists $m$ such that $c_\varphi - 1/m > a$. Let now $d = c_\varphi - 1/m$ and $B = C_m$. Then
$d\in (a,c_\varphi)$, $|y(i)| \le d \chi_A(i)$ for $i\in A\cap B$ and $\mu(A\cap B) > 0$.

Further we proceed analogously as in the proof for function spaces starting from  (\ref{eq:111}). We apply Lemma \ref{lem:1} for the interval $I=[a,d] \subset (d_\varphi, c_\varphi)\subset (d_\varphi, b_\varphi)$.
\end{proof}

Concerning the Daugavet property we will consider only the case of non-atomic measure since it is not difficult to show that any rearrangement invariant sequence space never has the Daugavet property.
In \cite{AKM2}, it was shown that an Orlicz space $L_\varphi$ generated by a finite Orlicz function $\varphi$  has the Daugavet property if and only if the space is isometrically isomorphic to $L_1$.  Similar result
 can be derived also from \cite{KK} where it was proved for  Musielak-Orlicz spaces. Below the given proof for Orlicz spaces $L_\varphi$  is   much simpler than those in \cite{AKM2, KK}.  In fact it is a direct corollary of Theorem \ref{th:KamKub}(i).

\begin{Theorem}\label{thm:DaugOrlicz}
	Let $\mu$ be a non-atomic measure. 	
	If $\varphi$ is a finite   Orlicz function then the only Orlicz space $L_\varphi$ having the Daugavet property
correspond to a linear function $\varphi$, that is $L_\varphi = L_1$ isometrically.
	\end{Theorem}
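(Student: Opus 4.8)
The plan is to derive the statement directly from Theorem~\ref{th:KamKub}(i) together with Corollary~\ref{prop}, exploiting the fact that these two results are exact converses of one another: a space with the Daugavet property admits \emph{no} uniformly non-$\ell_1^2$ point, whereas Theorem~\ref{th:KamKub}(i) manufactures such a point the moment $\varphi$ fails to be linear.

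First I would record what the hypothesis that $\varphi$ is finite forces on the parameters attached to $\varphi$. Since $\varphi(u)<\infty$ for all $u$, we have $b_\varphi=\sup\{u:\varphi(u)<\infty\}=\infty$, and because $\varphi$ is a nonzero convex Orlicz function, $\varphi(u)\to\infty$ as $u\to\infty$, so $\varphi(b_\varphi)=\infty$. As $\mu$ is a nontrivial non-atomic measure, $\mu(\Omega)>0$, whence $\varphi(b_\varphi)\mu(\Omega)>1$ holds automatically. Thus the first hypothesis of Theorem~\ref{th:KamKub}(i) is satisfied with no further work, and only the condition $d_\varphi<b_\varphi$ remains to be examined.

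Next I would argue by contraposition on linearity. Suppose $\varphi$ is not linear on all of $\mathbb{R}_+$. Then $\varphi$ ceases to be linear beyond some finite point, so $d_\varphi=\sup\{u:\varphi(u)\ \text{is linear}\}<\infty=b_\varphi$, giving $d_\varphi<b_\varphi$, the remaining hypothesis of Theorem~\ref{th:KamKub}(i). Applying that theorem produces a point $x=a\chi_A$ with $\|x\|_\varphi=1$ that is a uniformly non-$\ell_1^2$ point of $L_\varphi$. On the other hand, if $L_\varphi$ has the Daugavet property, Corollary~\ref{prop} asserts that every element of $S_{L_\varphi}$ is a uniformly $\ell_1^2$ point, i.e. there are no uniformly non-$\ell_1^2$ points at all. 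This contradiction forces $\varphi$ to be linear.

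Finally I would identify the resulting space. If $\varphi(u)=ku$ for some $k>0$, then $I_\varphi(f/\epsilon)=k\|f\|_1/\epsilon$, so the Luxemburg norm collapses to $\|f\|_\varphi=k\|f\|_1$ and $L_\varphi$ coincides with $L_1$ with proportional norm. I do not anticipate any genuine obstacle in this argument, since the whole difficulty has already been absorbed into Theorem~\ref{th:KamKub}(i). The only point demanding care is the bookkeeping at the interface of the two cited results, namely that ``uniformly $\ell_1^2$ point'' and ``uniformly non-$\ell_1^2$ point'' are literal negations of each other, so that Corollary~\ref{prop} is precisely the obstruction to the existence statement of Theorem~\ref{th:KamKub}(i).
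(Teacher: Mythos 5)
Your proposal is correct and takes essentially the same route as the paper's own proof: combine Corollary~\ref{prop} (the Daugavet property forces every point of $S_{L_\varphi}$ to be a uniformly $\ell_1^2$ point) with Theorem~\ref{th:KamKub}(i) (non-linearity of a finite $\varphi$ gives $d_\varphi<b_\varphi=\infty$ and hence produces a uniformly non-$\ell_1^2$ point, the hypothesis $\varphi(b_\varphi)\mu(\Omega)>1$ holding automatically), yielding the contradiction that identifies $\varphi(u)=ku$ and $\|\cdot\|_\varphi=k\|\cdot\|_1$. The only differences are cosmetic: you verify the hypotheses of Theorem~\ref{th:KamKub}(i) more explicitly than the paper does, while omitting the paper's one-line observation that, conversely, $L_\varphi=L_1$ isometrically clearly has the Daugavet property.
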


\begin{proof}

If $L_\varphi = L_1$ isometrically, clearly the Orlicz space has the Daugavet property.

Supposing $L_\varphi$ has the Daugavet property, by Corollary \ref{prop}, every point of the unit sphere of $L_\varphi$ is a uniformly $\ell_1^2$ point. Applying now Theorem \ref{th:KamKub}(i),  $d_\varphi = b_\varphi$, where $b_\varphi = \infty$ by the assumption that $\varphi$ assumes finite values. Therefore $\varphi(u) = ku$, for some $k>0$ and all $u\ge 0$. Consequently, $L_\varphi = L_1$ and $\|\cdot\|_\varphi = k\|\cdot\|_1$.
\end{proof}

\begin{Theorem}\label{thm:Orlicznormdiam} \rm(i) Let $\mu$ be a non-atomic measure. If $d_{\varphi_*} < b_{\varphi_*}$ and $\varphi_*(b_{\varphi_*})
\mu(\Omega) > 1$ then $L_\varphi^0$ does not have the local diameter two property.

\rm(ii) Let $\mu$ be the counting measure on $\mathbb{N}$. If $d_{\varphi_*} < c_{\varphi_*}$ and $\varphi_*(c_{\varphi_*}) =1$
 then $\ell_\varphi^0$ does not have the local diameter two property.
\end{Theorem}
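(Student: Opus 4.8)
The plan is to derive both parts from Theorem \ref{th:KamKub} applied to the complementary function $\varphi_*$, and then to transport the resulting uniformly non-$\ell_1^2$ point to a slice of $B_{L_\varphi^0}$ (resp.\ $B_{\ell_\varphi^0}$) of diameter $<2$ by combining the K\"othe duality $(L_\varphi^0)'=L_{\varphi_*}$ (resp.\ $(\ell_\varphi^0)'=\ell_{\varphi_*}$) with Theorem \ref{th:unif}. First I would note that the hypotheses of (i) and (ii) are \emph{exactly} the hypotheses of Theorem \ref{th:KamKub}(i) and (ii) with $\varphi$ replaced by $\varphi_*$. Hence there are $a>0$ and $A$ with $g:=a\chi_A$, $\|g\|_{\varphi_*}=1$, such that $g$ is a uniformly non-$\ell_1^2$ point of $L_{\varphi_*}$ (resp.\ $\ell_{\varphi_*}$). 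Since the defining inequality $\min\{\|g+y\|,\|g-y\|\}\le 2-\delta$ only gets easier when $y$ ranges over a norm-closed subspace containing $g$, the point $g$ is a fortiori uniformly non-$\ell_1^2$ in the order-continuous part $(L_{\varphi_*})_a$ once we know $g$ lies there.

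Next I would realize $L_\varphi^0$ as a dual space so that Theorem \ref{th:unif}(a) applies. Exactly as in the proof of Theorem \ref{th:RNKothe}(ii), when $(L_{\varphi_*})_a=(L_{\varphi_*})_b$ one has $\big((L_{\varphi_*})_a\big)^*\simeq\big((L_{\varphi_*})_a\big)'=(L_{\varphi_*})'=(L_\varphi^0)''=L_\varphi^0$, using Corollary 1.4.2 in \cite{BS} and the Fatou property of $L_\varphi^0$. The equality $(L_{\varphi_*})_a=(L_{\varphi_*})_b$ holds in the sequence case for every r.i.\ space (Theorem 2.5.4 in \cite{BS}), and there $g$ is finitely supported, hence lies in $(\ell_{\varphi_*})_a$; this settles part (ii). In the function case it holds whenever $\varphi$ is an $N$-function at infinity, for then $\varphi_*$ is finite by Lemma \ref{lem:finite}(a), so $\phi_{L_{\varphi_*}}(t)\to0$ by Lemma \ref{lem:finite}(b) and $g\in(L_{\varphi_*})_a$ by Theorem \ref{th:KamKub}(i). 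In either situation I would set $X=(L_{\varphi_*})_a$, so that $X^*=L_\varphi^0$, and apply Theorem \ref{th:unif}(a) to the uniformly non-$\ell_1^2$ point $g\in S_X$: for some $\epsilon>0$ the weak$^{*}$-slice $S(g;\epsilon)\subset B_{L_\varphi^0}$ has $\diam S(g;\epsilon)<2$. A weak$^{*}$-slice is in particular a slice of $B_{L_\varphi^0}$, so this exhibits a slice of diameter below two and proves that $L_\varphi^0$ (resp.\ $\ell_\varphi^0$) fails the LD2P.

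The step I expect to be the main obstacle is the remaining function-space case in which $\varphi$ is \emph{not} an $N$-function at infinity, i.e.\ $b_{\varphi_*}<\infty$. Then for non-atomic $\mu$ the space $(L_{\varphi_*})_a$ is no longer order dense in $L_{\varphi_*}$ (for a shrinking family $A_m\downarrow\emptyset$ one has $\|a\chi_{A_m}\|_{\varphi_*}\ge a/b_{\varphi_*}>0$, so $a\chi_A\notin(L_{\varphi_*})_a$), the predual disappears, and the argument above collapses. Here I would instead work inside $(L_\varphi^0)^*$ and use Theorem \ref{th:unif}(b): if $L_\varphi^0$ is order continuous---which is automatic over a finite measure, since $b_{\varphi_*}<\infty$ forces $\Delta_2^\infty$---then $(L_\varphi^0)^*=L_{\varphi_*}$ and the uniform non-$\ell_1^2$-ness of $g$ yields a slice of $B_{L_\varphi^0}$ of diameter $<2$ at once. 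The genuinely delicate situation is a non-order-continuous $L_\varphi^0$ (infinite non-atomic measure with $\Delta_2^0$ failing): then $(L_\varphi^0)^*$ carries nonzero singular functionals $s$, the norm is additive over the order-continuous/singular band decomposition, so $\|g\pm s\|=2$ and $g$ is no longer uniformly non-$\ell_1^2$ in the full dual. In that case one must either locate a different, mixed, uniformly non-$\ell_1^2$ functional in $(L_\varphi^0)^*$, or estimate $\diam S(g;\epsilon)$ directly through the Amemiya formula $\|f\|_\varphi^0=\inf_{k>0}\tfrac1k(1+I_\varphi(kf))$ together with Lemma \ref{lem:1}, in a modular computation paralleling the proof of Theorem \ref{th:KamKub}; this singular-functional bookkeeping is where I expect the real work to lie.
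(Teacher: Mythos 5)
Your argument is correct and close in spirit to the paper's for part (ii), and for part (i) in the two sub-cases you cover: when $b_{\varphi_*}=\infty$ (predual realization $((L_{\varphi_*})_a)^*\simeq (L_{\varphi_*})'=L_\varphi^0$ plus Theorem \ref{th:unif}(a)), and when $L_\varphi^0$ is order continuous (then $(L_\varphi^0)^*=(L_\varphi^0)'=L_{\varphi_*}$ and Theorem \ref{th:unif}(b) applies). But the statement of part (i) carries no such restriction, and you explicitly leave the remaining case unresolved: $\mu(\Omega)=\infty$, $b_{\varphi_*}<\infty$ (i.e.\ $\varphi$ not an $N$-function at infinity), and $\varphi$ failing the $\Delta_2$ condition, so that $L_\varphi^0$ is not order continuous and $(L_\varphi^0)^*$ contains nonzero singular functionals. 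This case is nonempty: take $\varphi$ behaving like $e^{-1/u}$ near zero (so $\Delta_2^0$, hence global $\Delta_2$, fails and $d_{\varphi_*}=0$) and affine at infinity (so $b_{\varphi_*}<\infty$); then $d_{\varphi_*}<b_{\varphi_*}$ and $\varphi_*(b_{\varphi_*})\mu(\Omega)>1$ automatically. Your diagnosis of why both of your mechanisms break there is accurate — $g=a\chi_A\notin(L_{\varphi_*})_a$ kills the predual route, and $\|g\pm s\|=2$ for singular $s$ kills the route through $(L_\varphi^0)^*$ — but offering two possible "directions" is not a proof, so as it stands the proposal is a genuinely incomplete proof of (i).

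The paper closes exactly this gap with one device that makes all case distinctions unnecessary: instead of realizing $L_\varphi^0$ as a dual space, or working inside $(L_\varphi^0)^*$, it embeds $L_\varphi^0$ \emph{into} the dual of $L_{\varphi_*}$. Since $(L_{\varphi_*})^*\simeq L_\varphi^0\oplus_1\mathcal{S}$ (\cite{Chen}, \cite{KR}) with no order-continuity, $\Delta_2$, or $b_{\varphi_*}=\infty$ hypothesis, there is an isometric embedding $i:L_\varphi^0\to(L_{\varphi_*})^*$. By Theorem \ref{th:KamKub} the point $x=a\chi_A\in S_{L_{\varphi_*}}$ is uniformly non-$\ell_1^2$, so by Theorem \ref{th:unif}(a) some weak$^*$-slice $S(x;\epsilon)\subset B_{(L_{\varphi_*})^*}$ has diameter $<2$. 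Then $T:=J(x)\circ i$, i.e.\ $T(y)=\int_\Omega xy\,d\mu$, is a norm-one functional on $L_\varphi^0$ (its norm equals $\|x\|_{\varphi_*}=1$ by the K\"othe duality $(L_\varphi^0)'=L_{\varphi_*}$), and the slice $S(T;\epsilon)$ of $B_{L_\varphi^0}$ satisfies $i(S(T;\epsilon))\subset S(x;\epsilon)$; since $i$ is isometric, ${\rm diam}\,S(T;\epsilon)<2$. The singular band $\mathcal{S}$ never interferes because the members of this slice are genuine functions in $L_\varphi^0$, not functionals on it: the "singular-functional bookkeeping" you anticipated simply does not arise once the roles of the two spaces are flipped. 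That flip is the missing idea you would need to complete your proof.
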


\begin{proof}  We will show only (i), since the sequence case is proved similarly.  By the assumptions in view of Theorem \ref{th:KamKub}(i), the space $L_{\varphi_*}$ has a uniformly non-$\ell_1^2$  point. In view of Theorem \ref{th:unif}  it is equivalent to that the dual space $(L_{\varphi_*})^*$ does not have the weak$^*$-star local diameter two property.  It is well-known that the dual space to Orlicz space $L_\varphi$ is isometrically isomorphic to  the direct sum $L_{\varphi_*}^0 \oplus_1 \mathcal{S}$, where $\mathcal{S}$ is a set of the singular functionals on $L_\varphi$ (\cite{Chen}).

 Therefore  the dual space   $ (L_{\varphi_*})^*$ is  isometrically isomorphic to $L_\varphi^0 \oplus_1 \mathcal{S}$ due to $\varphi_{**} = \varphi$ \cite{KR}.  By Theorem \ref{th:KamKub}(i), there exists a uniformly non-$\ell_1^2$ point $x\in S_{L_{\varphi_*}}$ of a unit ball in $L_{\varphi_*}$.  Hence in view of Proposition \ref{th:unif}, there exists $\epsilon > 0$ such that ${\rm diam}\, S(x;\epsilon) < 2$ where $S(x;\epsilon) = \{x^* \in B_{(L_{\varphi_*})^*} : x^*(x) > 1 - \epsilon\}$ is a weak$^*$-slice. Now, let $J: L_{\varphi_*} \rightarrow (L_{\varphi_*})^{**}$ be the canonical mapping so that $J(x)(x^*) = x^*(x)$. Letting $i: L_{\varphi}^0 \rightarrow (L_{\varphi_*})^*$  be  isometric embedding,  $T: = J(x) \circ i\in B_{(L_\varphi^0)^*}$ and    $S(T;\epsilon) = 	\{y \in B_{L_{\varphi}^0} : T(y) > 1 - \epsilon\} $ is a slice of the unit ball in $L_\varphi^0$. Moreover,
	\[
S(T;\epsilon) \subset \{x^* \in B_{(L_{\varphi_*})^*} : J(x)(x^*) > 1 - \epsilon\} =  \{x^* \in B_{(L_{\varphi_*})^*} : x^*(x) > 1 - \epsilon\} = S(x; \epsilon).
	\]
	 Therefore, ${\rm diam} \,S(T;\epsilon)<2$, and the space $L_\varphi^0$ does not have the local diameter two property.
\end{proof}

In \cite{AKM2} it has been proved that if $\varphi$ does not satisfy the appropriate $\Delta_2$ condition then $L_\varphi$ or $\ell_\varphi$ has the local diameter two property.  This result was generalized later to Orlicz-Lorentz spaces \cite{KT}. For the Orlicz spaces equipped with the Orlicz norm the situation is different.  As shown below, for a  large class of finite Orlicz functions the spaces $L_\varphi^0$ or $\ell_\varphi^0$ have no local diameter two property.

\begin{Corollary}\label{Cor:Orlicznormdiam}
	Let $\mu$ be a non-atomic measure on $\Sigma$ or $\mu$ be the counting measure on $\mathbb{N}$.	Let $\varphi$ be a finite  $N$-function at infinity.  Then there exists a slice of  $B_{L_\varphi^0}$, respectively of $B_{\ell_\varphi^0}$, with diameter less than two. Consequently, the Orlicz spaces $L_\varphi^0$ or $\ell_\varphi^0$ equipped with the Orlicz norm do not have the local diameter two property.
\end{Corollary}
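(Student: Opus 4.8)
The plan is to reduce Corollary \ref{Cor:Orlicznormdiam} to Theorem \ref{thm:Orlicznormdiam} by verifying that the hypotheses on $\varphi_*$ required there are automatically satisfied whenever $\varphi$ is a finite $N$-function at infinity. The two conditions I need to check are the ones appearing in Theorem \ref{thm:Orlicznormdiam}: in the non-atomic case, $d_{\varphi_*} < b_{\varphi_*}$ together with $\varphi_*(b_{\varphi_*})\mu(\Omega) > 1$, and in the sequence case, $d_{\varphi_*} < c_{\varphi_*}$ together with $\varphi_*(c_{\varphi_*}) = 1$. So the entire corollary is a matter of translating the assumption ``$\varphi$ is a finite $N$-function at infinity'' into these structural facts about the complementary function $\varphi_*$.

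First I would record that since $\varphi$ is an $N$-function at infinity, Lemma \ref{lem:finite}(a) gives that $\varphi_*$ is finite, i.e.\ $b_{\varphi_*} = \infty$. This immediately disposes of the condition $d_{\varphi_*} < b_{\varphi_*}$ in part (i), since $d_{\varphi_*}$ is the endpoint of the linear part of $\varphi_*$ and must be finite (an $N$-function at infinity is genuinely superlinear, so $\varphi$ cannot be linear on all of $\mathbb{R}_+$, which by duality forbids $\varphi_*$ from being linear near infinity). With $b_{\varphi_*} = \infty$ the condition $\varphi_*(b_{\varphi_*})\mu(\Omega) > 1$ reads $\varphi_*(\infty)\cdot\mu(\Omega) = \infty > 1$, which holds trivially. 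Hence both hypotheses of Theorem \ref{thm:Orlicznormdiam}(i) are met and the non-atomic conclusion follows at once.

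For the sequence case I would need $d_{\varphi_*} < c_{\varphi_*}$ and $\varphi_*(c_{\varphi_*}) = 1$, where $c_{\varphi_*} = \sup\{u : \varphi_*(u) \le 1\}$. Since $\varphi_*$ is finite and continuous with $\varphi_*(u)/u \to \infty$ at infinity, $\varphi_*$ takes the value $1$ at a finite point, giving $c_{\varphi_*} < \infty$ and $\varphi_*(c_{\varphi_*}) = 1$ by left-continuity; the strict inequality $d_{\varphi_*} < c_{\varphi_*}$ again uses that $\varphi_*$ is strictly convex past its initial linear segment. These verifications are the only real content; the main obstacle, such as it is, lies in carefully arguing the strict inequality $d_{\varphi_*} < b_{\varphi_*}$ (resp.\ $d_{\varphi_*} < c_{\varphi_*}$), that is, ruling out the degenerate possibility that $\varphi_*$ is linear on the whole relevant range, which would correspond to $\varphi$ being essentially $L_\infty$-like and is excluded precisely by $\varphi$ being a genuine finite $N$-function at infinity.

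Once the hypotheses are checked, both conclusions follow directly from Theorem \ref{thm:Orlicznormdiam}: there is a slice of the relevant unit ball with diameter strictly less than two, and by the very definition of the local diameter two property (every slice has diameter two), the existence of a single slice of smaller diameter shows that $L_\varphi^0$, respectively $\ell_\varphi^0$, fails LD2P. I would state the proof in two short sentences invoking Lemma \ref{lem:finite}(a) for the finiteness of $\varphi_*$ and then Theorem \ref{thm:Orlicznormdiam} for the two cases, keeping the argument as brief as the corollary's status as an ``instant corollary'' warrants.
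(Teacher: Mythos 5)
Your route is the same as the paper's: reduce to Theorem \ref{thm:Orlicznormdiam} by checking its hypotheses for $\varphi_*$, using Lemma \ref{lem:finite}(a) to get that $\varphi_*$ is finite, i.e.\ $b_{\varphi_*}=\infty$. For the non-atomic case your verification is essentially the paper's, except that one parenthetical is misstated: since $d_{\varphi_*}$ measures the \emph{initial} linear segment of $\varphi_*$, what must be excluded is that $\varphi_*$ is linear on all of $\mathbb{R}_+$, not that it is ``linear near infinity''. You do say the right thing later (``linear on the whole relevant range''), and the clean argument is the paper's: if $d_{\varphi_*}=\infty$ then $\varphi_*(v)=kv$ for all $v\ge 0$, whence $\varphi=\varphi_{**}$ takes only the values $0$ and $\infty$, contradicting finiteness of $\varphi$. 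With that, $d_{\varphi_*}<b_{\varphi_*}=\infty$ and $\varphi_*(b_{\varphi_*})\mu(\Omega)=\infty>1$, so Theorem \ref{thm:Orlicznormdiam}(i) applies.

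The genuine gap is in the sequence case. You assert that $d_{\varphi_*}<c_{\varphi_*}$ follows because ``$\varphi_*$ is strictly convex past its initial linear segment''. Nothing in the hypotheses gives strict convexity, and, more seriously, the inequality itself can fail under the stated assumptions: the initial linear segment of $\varphi_*$ may climb above height $1$. Concretely, take $\varphi(u)=\max(0,u^2/4-1)$, which is a finite $N$-function at infinity; then $\varphi_*(v)=2v$ for $0\le v\le 1$ and $\varphi_*(v)=v^2+1$ for $v\ge 1$, so $d_{\varphi_*}=1$ while $c_{\varphi_*}=1/2$, and Theorem \ref{thm:Orlicznormdiam}(ii) is not applicable. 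This is not repairable by a better argument for the same claim: for this $\varphi$ one checks $\{y: I_{\varphi_*}(y)\le 1\}=\{y:\|y\|_1\le 1/2\}$, hence $\|x\|_\varphi^0=\tfrac12\|x\|_\infty$, so $\ell_\varphi^0$ is a rescaled $\ell_\infty$, which \emph{does} have the LD2P (indeed the SD2P, by Theorem \ref{SD2P} applied to the $M$-ideal $c_0\subset\ell_\infty$ with norming part $\ell_1$). The inequality $d_{\varphi_*}<c_{\varphi_*}$ does hold if one additionally assumes $\varphi(u)>0$ for all $u>0$, since then the initial linear segment of $\varphi_*$ has slope $a_\varphi=0$, i.e.\ $\varphi_*$ vanishes on $[0,d_{\varphi_*}]$, which forces $c_{\varphi_*}>d_{\varphi_*}$. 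To be fair, the paper's own proof only verifies the hypotheses of part (i) and is silent about part (ii), so you have put your finger on a real soft spot; but your proposed justification of the missing inequality is incorrect, and the sequence half of the statement needs an extra hypothesis of this kind.
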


\begin{proof}
	Since $\varphi$ is an $N$-function at infinity then in view of Lemma \ref{lem:finite}, $\varphi_*$ is a  finite  function and so $b_{\varphi_*} = \infty$.  We also have that $d_{\varphi_*} <\infty$. Indeed, if for a contrary $d_{\varphi_*} =\infty$ then $\varphi_*(v)  = kv$ for some $k>0$ and all $v\ge 0$. Then it is easy to show that $\varphi = \varphi_{**}$ assumes only zero or infinity values,  which contradicts the assumption that $\varphi$ is a finite Orlicz function.    We complete the proof by
application of Theorem \ref{thm:Orlicznormdiam}.\end{proof}

We conclude this paper by showing that the SD2P, D2P and LD2P are equivalent in $L_\varphi$ or $\ell_\varphi$ when  $\varphi$ does not satisfy the appropriate $\Delta_2$ condition. Recall a subspace $Y$ of $X^*$ is said to be {\it norming} if for every $x \in X$,
\[
\|x\| = \sup\{|x^*(x)| : \|x^*\|_{X^*} \leq 1, x^* \in Y\}.
\]

\begin{Proposition}\cite[Proposition 1.b.18]{LT2}, \label{LT2}
	If $X$ is a Banach function space with the Fatou property, then the K\"othe dual space $X'$ is order isometric to a norming subspace of $X^*$.
\end{Proposition}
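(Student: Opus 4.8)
The plan is to realize $X'$ inside $X^*$ via the natural integral pairing and then extract all three requirements — the linear embedding into $X^*$, the order isometry, and the norming property — from the K\"othe duality recorded in the preliminaries. First I would define $\Phi\colon X'\to X^*$ by $\Phi(g)(f)=\int_\Omega fg\,d\mu$. This is well defined: for $g\in X'$ and $f\in X$ the H\"older-type inequality for the associate pairing gives $\bigl|\int_\Omega fg\,d\mu\bigr|\le\int_\Omega|fg|\,d\mu\le\|f\|_X\|g\|_{X'}$, so $\Phi(g)\in X^*$ with $\|\Phi(g)\|_{X^*}\le\|g\|_{X'}$, and $\Phi$ is clearly linear.

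Next I would check that $\Phi$ is an order isometry onto its image $Y:=\Phi(X')$. In a Banach function lattice $\||f|\|_X=\|f\|_X$, and $|h|\le1$ forces $hf\in X$ with $\|hf\|_X\le\|f\|_X$; applying this with $h=\sign g$ and replacing $f$ by $|f|\sign g$ yields
\[
\|\Phi(g)\|_{X^*}=\sup_{\|f\|_X\le1}\Bigl|\int_\Omega fg\,d\mu\Bigr|=\sup_{\|f\|_X\le1}\int_\Omega|f||g|\,d\mu=\|g\|_{X'},
\]
so $\Phi$ is an isometric injection. For the order structure I would verify the lattice-homomorphism identity $|\Phi(g)|=\Phi(|g|)$ in $X^*$: from the formula $|\phi|(f)=\sup_{|f'|\le f}|\phi(f')|$ for the modulus of $\phi\in X^*$ and the same sign device one gets $|\Phi(g)|(f)=\int_\Omega f|g|\,d\mu=\Phi(|g|)(f)$ for every $0\le f\in X$. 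Thus $\Phi$ is an isometric lattice homomorphism, i.e. an order isometry of $X'$ onto the subspace $Y\subset X^*$.

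Finally, to see that $Y$ is norming I would just unwind the definition. Because $\|\Phi(g)\|_{X^*}=\|g\|_{X'}$, the norming condition for $Y$ reads
\[
\|f\|_X=\sup\Bigl\{\,\Bigl|\int_\Omega fg\,d\mu\Bigr| : g\in X',\ \|g\|_{X'}\le1\Bigr\}=\|f\|_{X''}\qquad(f\in X),
\]
which is precisely the statement $X=X''$ with equal norms. This is where the Fatou hypothesis enters and is, I expect, the only genuine content of the proof: by the Lorentz--Luxemburg theorem recalled in the preliminaries, a Banach function space satisfies $X=X''$ isometrically exactly when it enjoys the Fatou property. The embedding, the isometry, and the lattice-homomorphism identity are routine K\"othe-pairing bookkeeping, whereas without Fatou one only has $\|f\|_{X''}\le\|f\|_X$ and $Y$ may fail to norm $X$.
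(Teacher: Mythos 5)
Your proof is correct. There is nothing inside the paper to compare it against: Proposition \ref{LT2} is stated with a citation to \cite{LT2} (pg.\ 29) and no proof, so your argument supplies what the reference conceals, and it does so using only facts already recorded in the paper's preliminaries. Your three steps are the right ones and are carried out correctly: H\"older's inequality for the K\"othe pairing together with the sign substitution $f \mapsto |f|\sign g$ (legitimate by the ideal property of a Banach function lattice) shows that $\Phi$ is isometric; the Riesz--Kantorovich formula for the modulus of a functional on a Banach lattice gives $|\Phi(g)| = \Phi(|g|)$, hence $\Phi$ is an order isometry onto its range; and neither of these steps uses the Fatou property. The Fatou property enters exactly where you locate it: unwinding the definition of \emph{norming} reduces it to the assertion that the canonical inclusion of $X$ into $X''$ is isometric, which follows from the Lorentz--Luxemburg theorem ($X = X''$ with equal norms if and only if $X$ has the Fatou property, \cite{Z}), quoted in Section 2 of the paper. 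One small imprecision worth fixing: the norming property is equivalent to $\|f\|_{X''} = \|f\|_X$ for $f \in X$ only, which is formally \emph{weaker} than the full identity $X = X''$ with equal norms (the latter additionally asserts $X'' \subset X$); so ``precisely the statement $X = X''$'' should read ``implied by the statement $X = X''$''. Since the Fatou property yields the stronger statement, the implication you need goes through and the proof stands.
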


We say a closed subspace $Y$ is an \emph{$M$-ideal} in $X$ if $Y^\perp$ is the range of the bounded projection  $P: X^* \rightarrow X^*$ such that $\|x^*\| = \|Px^*\| + \|(I-P)x^*\|$, that is $X^* = Y^{\perp} \oplus_1 Z$ for some subspace $Z$ of $X^*$. In fact, there is a connection between $M$-ideals and the SD2P.

\begin{Theorem} \cite[Theorem 4.10]{ALN}\label{SD2P}
	Let $Y$ be a proper subspace of $X$ and let $Y$ be an $M$-ideal in $X$ i.e. $X^* = Y^{\perp} \oplus_1 Z$.   If $Z$ is  a norming subspace of $X^*$, then both $X$ and $Y$ have the strong diameter two property.
\end{Theorem}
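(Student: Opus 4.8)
The plan is to unwind the definition of the SD2P directly and to use the $M$-ideal structure to produce, inside every finite convex combination of slices of $B_X$, two points that are almost $2$ apart. So I would fix slices $S(x_1^*,\alpha_1),\dots,S(x_n^*,\alpha_n)$ with $x_i^*\in S_{X^*}$, weights $\lambda_i>0$ summing to $1$, and $\epsilon>0$, and set $C=\sum_i\lambda_i S(x_i^*,\alpha_i)$. First I would pick $a_i\in B_X$ with $x_i^*(a_i)>1-\alpha_i$, so that $a=\sum_i\lambda_i a_i\in C$. Using the $L$-decomposition $X^*=Y^\perp\oplus_1 Z$ I would split each functional as $x_i^*=w_i^*+z_i$ with $w_i^*\in Y^\perp$, $z_i\in Z$ and $\|x_i^*\|=\|w_i^*\|+\|z_i\|$. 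The decisive elementary observation is that $w_i^*$ annihilates $Y$, so $x_i^*(y)=z_i(y)$ for every $y\in Y$; this is exactly what lets a perturbation drawn from the $M$-ideal interact with the slice functionals only through their $Z$-parts.

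Next I would manufacture a single perturbation direction $y\in Y$ with $\|y\|$ close to $1$ such that $a+y,\ a-y\in C$, whence ${\rm diam}\,C\ge\|(a+y)-(a-y)\|=2\|y\|$ is essentially $2$. For $a\pm y$ to remain in $C$ I need, for each $i$, both a norm bound $\|a_i\pm y\|\le 1$ (to stay in $B_X$) and the slice condition $x_i^*(a_i\pm y)>1-\alpha_i$. The norm bound is precisely what the $M$-ideal supplies: by the $3$-ball property characterizing $M$-ideals one may replace each $a_i$ by a nearby $\tilde a_i=a_i+c_i$ with $c_i\in Y$ so that $\|\tilde a_i\pm y\|\le 1+\epsilon$ for a prescribed $y\in S_Y$ (rescaling by $1+\epsilon$ at the end). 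For the slice condition, since $x_i^*(\tilde a_i\pm y)=x_i^*(a_i)+z_i(c_i)\pm z_i(y)$, I would choose $y$ in the finite-codimensional subspace $\bigcap_{i}\ker(z_i|_Y)$ of $Y$, which is infinite-dimensional (as $Z\cong Y^*$ is infinite-dimensional whenever $X$ is) and hence contains a unit vector; this forces $z_i(y)=0$ and leaves only the terms $z_i(c_i)$ to be controlled.

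The hard part will be reconciling these two requirements: the corrections $c_i\in Y$ produced by the $3$-ball property keep the points in the ball but are a priori uncontrolled on the functionals $z_i$, so the quantities $z_i(c_i)$ threaten to push $\tilde a_i\pm y$ back out of the slices. Overcoming this is where the hypothesis that $Z$ is norming must be combined with the $M$-structure. Norming yields $\overline{\rm conv}^{\,w^*}(B_Z)=B_{X^*}$ and, because the $\ell_1$-splitting makes restriction to $Y$ an isometry of $Z$ onto $X^*/Y^\perp$, the isometric identification $Z\cong Y^*$; these are what allow the correction step to be executed inside the kernels $\ker(z_i|_Y)$, i.e.\ with each $z_i(c_i)$ as small as desired. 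I note that one could equally reformulate the whole argument through the global duality ``$X^*$ is octahedral if and only if $X$ has the SD2P'' (the strong analogue of Corollary~\ref{HLP}); there the nonzero $\ell_1$-summand $Y^\perp$ together with the norming complement is exactly the data needed to verify octahedrality of $X^*$, and the same bookkeeping with the $z_i$ resurfaces, so this route meets the identical crux. The properness of $Y$ enters here as well: it guarantees $Y^\perp\neq\{0\}$, i.e.\ a genuine nontrivial $L$-summand, without which the $M$-structure is vacuous and no perturbation room exists.

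Finally, for $Y$ itself the plan is to run the entire construction with all data inside $Y$. A slice of $B_Y$ is determined by some $\psi\in S_{Y^*}=S_Z$, the base points $a_i$ may be taken in $B_Y$, and both the perturbation $y$ and the $3$-ball corrections $c_i$ already lie in $Y$; since the norm of $X$ restricts to the norm of $Y$, the witnesses $a\pm y$ belong to $B_Y$ and are almost $2$ apart there as well. Thus the identical argument delivers the SD2P for $Y$, and I expect no ingredient beyond those already used for $X$.
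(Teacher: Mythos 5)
This theorem is quoted in the paper from \cite{ALN} without proof, so your proposal must be measured against the argument that actually establishes it (essentially that of \cite{ALN}). Your plan stalls exactly at the step you yourself flag as ``the hard part,'' and the gap is genuine, not bookkeeping. The $3$-ball property produces corrections $c_i\in Y$ with $\|a_i+c_i\pm y\|\le 1+\epsilon$, but it gives no weak control whatsoever over the $c_i$, and in general no admissible correction has small $z_i$-value: the ball geometry forces the corrections to be large precisely on functionals supported where $y$ acts. Concretely, take $X=c$, $Y=c_0$ (an M-ideal with norming complement $Z=\ell_1$), $a=(1,1,1,\dots)$, $z^*=\tfrac12(e_1^*+e_2^*)\in S_Z$, and $y=e_1-e_2\in \ker\bigl(z^*|_{c_0}\bigr)\cap S_{c_0}$. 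Any $c\in c_0$ with $\|a\pm y-c\|_\infty\le 1+\epsilon$ must satisfy $c(1)\ge 1-\epsilon$ and $c(2)\ge 1-\epsilon$, hence $z^*(c)\ge 1-\epsilon$, so the corrected points $a-c\pm y$ drop completely out of the slice determined by $z^*$. Thus membership of $y$ in the kernels $\ker(z_i|_Y)$ is not the right condition; one needs $y$ ``disjoint'' from the $z_i$ in a far stronger sense, and neither the $3$-ball property nor the norming hypothesis, as you invoke them, produces such a direction in a Banach space without coordinate structure. Note also that $\bigcap_i\ker(z_i|_Y)$ is not an M-ideal in $X$, so it carries no ball-intersection property of its own. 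Finally, your remark that the octahedrality route ``meets the identical crux'' is backwards: that route is exactly how the crux is avoided.

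The proof that works never perturbs points and uses no ball property; it runs entirely on the dual side. Since $Y$ is proper, pick $w^*\in S_{Y^\perp}$. Since $Z$ is norming, $B_Z$ is weak$^*$ dense in $B_{X^*}$, so there is a net $(z_\alpha)\subset B_Z$ with $z_\alpha\to w^*$ weak$^*$. Decompose the slice functionals $x_i^*=y_i^*+z_i^*\in Y^\perp\oplus_1 Z$. Because $z_i^*$ and $w^*$ lie in complementary $\ell_1$-summands, $\|z_i^*\pm w^*\|=\|z_i^*\|+1$ exactly, so by weak$^*$ lower semicontinuity of the norm there is an index $\alpha$ with $\|z_i^*\pm z_\alpha\|>\|z_i^*\|+1-\eta$ for all $i$ and both signs; consequently $\|x_i^*\pm z_\alpha\|=\|y_i^*\|+\|z_i^*\pm z_\alpha\|>2-\eta$. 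Choosing $u_i,v_i\in B_X$ nearly attaining the norms of $x_i^*+z_\alpha$ and $x_i^*-z_\alpha$ respectively gives, for $\eta<\min_i\alpha_i$, points $u_i,v_i\in S(x_i^*,\alpha_i)$ with $z_\alpha(u_i-v_i)>2-2\eta$, hence $\bigl\|\sum_i\lambda_i(u_i-v_i)\bigr\|>2-2\eta$. The shared witness is a single functional $z_\alpha$, not a single perturbation vector, and the differences $u_i-v_i$ vary with $i$ --- a structurally different mechanism from yours. For $Y$ one uses that restriction is an isometry of $Z$ onto $Y^*$ (immediate from the $\ell_1$-decomposition, since $\mathrm{dist}(z,Y^\perp)=\|z\|$ for $z\in Z$) and repeats the computation with $x_i^*=z_i^*\in Z$, finding the witnesses in $B_Y$. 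Your correct observations --- properness gives $Y^\perp\ne\{0\}$, and functionals see $Y$ only through their $Z$-parts --- are indeed ingredients, but the engine of the proof, weak$^*$ approximation of $w^*$ by elements of $B_Z$ combined with $\ell_1$-additivity and lower semicontinuity, is absent from your proposal.
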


\begin{Corollary}\label{th:Mideal}
Let $\mu$ be a non-atomic measure on $\Sigma$ or the counting measure on $\mathbb{N}$. Given a finite  Orlicz function $\varphi$ which does not satisfy the appropriate $\Delta_2$ condition, the spaces $L_{\varphi}$ or  $\ell_\varphi$ and their  proper subspaces $(L_{\varphi})_a\ne \{0\}$ or $(\ell_\varphi)_a\ne \{0\}$ have the strong diameter two property.

\end{Corollary}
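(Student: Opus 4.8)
The plan is to realize $L_\varphi$ and $(L_\varphi)_a$ as the pair $(X,Y)$ in Theorem \ref{SD2P}, taking $X=L_\varphi$ and $Y=(L_\varphi)_a$. First I would record that $Y$ is a \emph{proper} nonzero subspace of $X$: finiteness of $\varphi$ guarantees that $(L_\varphi)_a$ contains all simple functions, hence $(L_\varphi)_a\ne\{0\}$, while the failure of the appropriate $\Delta_2$ condition gives $(L_\varphi)_a\ne L_\varphi$ by the equivalence recalled in the preliminaries. Thus $Y$ is a genuine proper subspace and the hypotheses of Theorem \ref{SD2P} are meaningful to verify.

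The heart of the argument is to exhibit the $M$-ideal decomposition $X^*=Y^\perp\oplus_1 Z$ with $Z$ norming. I would start from the isometric identification $(L_\varphi)^*\cong L_{\varphi_*}^0\oplus_1\mathcal{S}$ used already in the proof of Theorem \ref{thm:Orlicznormdiam}, where $\mathcal{S}$ denotes the singular functionals on $L_\varphi$. The key step is to identify the annihilator $Y^\perp=((L_\varphi)_a)^\perp$ with the singular part $\mathcal{S}$. One inclusion is immediate, since every singular functional vanishes on order continuous elements, so $\mathcal{S}\subseteq Y^\perp$. For the reverse inclusion I would argue that a regular functional, given by integration against some $g\in L_{\varphi_*}^0=(L_\varphi)'$, which vanishes on all of $(L_\varphi)_a$ must in particular annihilate every simple function, forcing $g=0$; hence any element of $Y^\perp$ has trivial regular part and lies in $\mathcal{S}$. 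This yields $X^*=Z\oplus_1 Y^\perp$ with $Z=L_{\varphi_*}^0$. Finally, since $L_\varphi$ has the Fatou property, Proposition \ref{LT2} shows that $Z=L_{\varphi_*}^0=(L_\varphi)'$ is order isometric to a norming subspace of $X^*$, so $Z$ is norming.

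With the decomposition and the norming property in hand, Theorem \ref{SD2P} applies directly and yields that both $X=L_\varphi$ and $Y=(L_\varphi)_a$ have the strong diameter two property. The sequence case is handled identically, replacing $L_{\varphi_*}^0$ by $\ell_{\varphi_*}^0$ and using $(\ell_\varphi)'=\ell_{\varphi_*}^0$ together with the analogous dual decomposition. I expect the main obstacle to be the precise identification $Y^\perp=\mathcal{S}$: one must be sure that annihilating the order continuous part $(L_\varphi)_a$ is exactly the same as being singular, which rests on the fact that the regular part of the dual is detected already on simple functions and that singular functionals are precisely those killing order continuous elements.
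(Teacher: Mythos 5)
Your proposal is correct, and its overall skeleton coincides with the paper's proof: both realize the pair $(X,Y)=(L_\varphi,(L_\varphi)_a)$ in Theorem \ref{SD2P}, obtain $Y\ne\{0\}$ from finiteness of $\varphi$, properness of $Y$ from the failure of the appropriate $\Delta_2$ condition, the norming of the complement $Z$ from Proposition \ref{LT2} via the Fatou property, and then conclude. The one genuine difference is at the $M$-ideal step: the paper simply cites \cite{HWW} for the fact that $(L_\varphi)_a$ is an $M$-ideal in $L_\varphi$, whereas you reconstruct the decomposition $X^*=Y^\perp\oplus_1 Z$ by hand from the identification $(L_\varphi)^*\simeq L_{\varphi_*}^0\oplus_1\mathcal{S}$ (which the paper itself invokes in the proof of Theorem \ref{thm:Orlicznormdiam}) together with the identification $((L_\varphi)_a)^\perp=\mathcal{S}$. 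Your verification of that identification is sound: the inclusion $\mathcal{S}\subseteq((L_\varphi)_a)^\perp$ is essentially definitional (in \cite{Chen} singular functionals are precisely those vanishing on $E_\varphi=(L_\varphi)_a$), and conversely, if $F=F_g+F_s$ annihilates $(L_\varphi)_a$, then $F_g=F-F_s$ annihilates every simple function with support of finite measure (these lie in $(L_\varphi)_a$ because $\varphi$ is finite), which by $\sigma$-finiteness of $\mu$ forces $g=0$ a.e., hence $F=F_s\in\mathcal{S}$. What your route buys is self-containedness: the $M$-ideal property is derived from a dual decomposition already in play rather than imported from \cite{HWW}; what the paper's citation buys is brevity. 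One caution: if one adopts the lattice-theoretic definition of singularity (disjointness from all integral functionals) rather than the one in \cite{Chen}, your ``immediate'' inclusion requires the standard but nontrivial fact that singular functionals vanish on order continuous elements, so it is cleanest to fix the definition of $\mathcal{S}$ as vanishing on $E_\varphi$ from the outset.
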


\begin{proof}
Let $\mu$ be non-atomic. By the assumption that $\varphi$ is finite, the subspace $(L_{\varphi})_a$ is non-trivial. Moreover it is well-known that  it is an $M$-ideal in $L_{\varphi}$  \cite{HWW}. It is a proper subspace if $(L_{\varphi})_a\ne L_{\varphi}$, which is equivalent  to that $\varphi$ does not satisfy the appropriate $\Delta_2$ condition. By Proposition \ref{LT2}, $(L_{\varphi})' \simeq ((L_{\varphi})_a)^*$ is a norming subspace of $(L_\varphi)^*$. Hence by Theorem \ref{SD2P},  both $(L_{\varphi})_a$ and $L_{\varphi}$  have the strong diameter two property. The proof in sequence case is similar.
\end{proof}

The $M$-ideal property of the order continuous subspace of an Orlicz-Lorentz space has been studied \cite{KLT}. In our final result, we obtain full characterization of (local, strong) diameter two properties in Orlicz spaces equipped with the Luxemburg norm.  It is  completion and extension of Theorems 2.5 and 2.6 from \cite{AKM}, where it was shown that $L_\varphi$ or $\ell_\varphi$ have the D2P whenever $\varphi$ does not satisfy appropriate condition $\Delta_2$.

\begin{Theorem}\label{OReq} Let $\mu$ be a non-atomic measure on $\Sigma$ or the counting measure on $\mathbb{N}$ and let $\varphi$ be a finite Orlicz  function. Consider the following properties.
	\begin{itemize}
		\item[(i)]  $L_\varphi$ or $\ell_\varphi$ has the local diameter two property.
		\item[(ii)]  $L_\varphi$ or $\ell_\varphi$ has the  diameter two property.
		\item[(iii)]  $L_\varphi$ or $\ell_\varphi$ has the strong  diameter two property.
		\item[(iv)] $\varphi$ does not satisfy the appropriate $\Delta_2$ condition.
	\end{itemize}
Then $\rm(iii) \implies (ii) \implies (i)$. For the sequence space $\ell_\varphi$ all properties $\rm(i)-(iv)$ are equivalent. If in addition $\varphi$ is $N$-function at infinity then all $\rm(i)-(iv)$ are also equivalent for the function space $L_\varphi$.
\end{Theorem}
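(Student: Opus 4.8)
The plan is to close a short cycle of implications. The implications $\mathrm{(iii)} \implies \mathrm{(ii)} \implies \mathrm{(i)}$ hold in every Banach space and are already recorded in the introduction (from \cite{GGMS}), so nothing space-specific is needed there. To upgrade this to a full equivalence it therefore suffices to supply the two remaining arrows $\mathrm{(iv)} \implies \mathrm{(iii)}$ and $\mathrm{(i)} \implies \mathrm{(iv)}$; combined with the general chain these yield the cycle $\mathrm{(i)} \implies \mathrm{(iv)} \implies \mathrm{(iii)} \implies \mathrm{(ii)} \implies \mathrm{(i)}$, hence the equivalence of all four conditions in each of the two regimes.

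The arrow $\mathrm{(iv)} \implies \mathrm{(iii)}$ is exactly Corollary \ref{th:Mideal}. When $\varphi$ fails the appropriate $\Delta_2$ condition, the finiteness of $\varphi$ guarantees that $(L_\varphi)_a$ (resp. $(\ell_\varphi)_a$) is a nontrivial \emph{proper} subspace which is an $M$-ideal, and its complementary summand is norming by Proposition \ref{LT2}; Theorem \ref{SD2P} then delivers the SD2P for both $L_\varphi$ and $\ell_\varphi$. So $\mathrm{(iv)}$ gives $\mathrm{(iii)}$ with no further work.

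For $\mathrm{(i)} \implies \mathrm{(iv)}$ I would argue by contraposition, exploiting that LD2P and RNP sit at opposite ends: under the RNP the ball $B_X$ is dentable, so it admits slices of arbitrarily small diameter, whereas LD2P forces \emph{every} slice of $B_X$ to have diameter exactly two. In an infinite-dimensional Orlicz space these cannot coexist, so RNP implies the failure of LD2P. Now suppose $\mathrm{(iv)}$ fails, i.e. $\varphi$ satisfies the appropriate $\Delta_2$ condition. In the sequence case, Theorem \ref{th:RNP-ORseq} immediately gives the RNP for $\ell_\varphi$, so $\ell_\varphi$ cannot have LD2P. In the function case one uses the extra standing hypothesis that $\varphi$ is an $N$-function at infinity: by Theorem \ref{th:OrRN-funct} the conjunction ``$N$-function at infinity plus appropriate $\Delta_2$'' yields the RNP for $L_\varphi$, so again LD2P fails. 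Contraposition produces $\mathrm{(i)} \implies \mathrm{(iv)}$ in both settings.

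The one genuinely delicate point is the incompatibility of RNP and LD2P, which should be stated explicitly via dentability rather than treated as folklore; everything else is bookkeeping that invokes earlier results. The reason the extra $N$-function hypothesis is unavoidable in the function case is transparent from Theorem \ref{th:OrRN-funct}: for a non-atomic measure the $\Delta_2$ condition alone does \emph{not} characterize the RNP, since a $\varphi$ that is not an $N$-function at infinity forces a copy of $L_1[0,1]$ inside $L_\varphi$ (Proposition \ref{pro1}), and $L_1[0,1]$ fails the RNP; by contrast $\Delta_2^0$ alone suffices in the sequence setting, which is why no analogous hypothesis is needed for $\ell_\varphi$.
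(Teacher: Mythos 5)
Your proposal is correct and takes essentially the same route as the paper's own proof: $\mathrm{(iii)} \implies \mathrm{(ii)} \implies \mathrm{(i)}$ cited from the literature, $\mathrm{(iv)} \implies \mathrm{(iii)}$ via Corollary \ref{th:Mideal}, and $\mathrm{(i)} \implies \mathrm{(iv)}$ from the incompatibility of the LD2P with the RNP combined with Theorems \ref{th:OrRN-funct} and \ref{th:RNP-ORseq} (the paper states this last step directly rather than by contraposition, but the logic is identical). The only cosmetic difference is your appeal to dentability: with the paper's geometric formulation of the RNP (condition (v) in the introduction), the incompatibility with LD2P is immediate from the definitions, and no restriction to infinite-dimensional spaces is needed.
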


\begin{proof}
	The fact $\rm(iii) \implies (ii) \implies (i)$ is well-known in general Banach spaces \cite{ALN, GGMS}. The implication $\rm(iv) \implies (iii)$ follows from Corollary \ref{th:Mideal}. If $L_\varphi$ has the local diameter two property then the space can not have the RNP. Thus  from Theorems \ref{th:OrRN-funct} and \ref{th:RNP-ORseq},  (i) $\implies$ (iv).
\end{proof}

\end{document}